\def\doctype{}
\def\cA{\mathcal{A}}
\def\cB{\mathcal{B}}
\def\F{{\mathbb F}}
\def\Z{{\mathbb Z}}
\newcommand{\comment}[1]{}
\numberwithin{equation}{section}
\let\oldsection\section
\newcommand\boldsection[1]{\oldsection{\bf #1}}
\newcommand\starsection[1]{\oldsection*{\bf #1}}
\renewcommand\section{\@ifstar\starsection\boldsection}
\newtheoremstyle{thm}
  {12pt}		  
  {0pt}  
  {\sl}  
  {\parindent}     
  {\bf}  
  {. }    
  { }    
  {}     
\theoremstyle{thm}
\newtheorem{thm}{Theorem}[section]  
\newtheorem{lemma}[thm]{Lemma}     
\newtheorem{cor}[thm]{Corollary}
\newtheorem{cons}[thm]{Construction}
\newtheoremstyle{definition}
  {12pt}		  
  {0pt}  
  {}  
  {\parindent}     
  {\bf}  
  {. }    
  { }    
  {}     
\theoremstyle{definition}
\newcommand\rk{{\sc Remark.} }
\renewcommand{\proofname}{Proof}
\renewenvironment{proof}[1][\proofname]{\par
  \pushQED{\qed}%
  \normalfont \partopsep=\z@skip \topsep=\z@skip
  \trivlist
  \item[\hskip\labelsep
        \scshape
    #1\@addpunct{.}]\ignorespaces
}{%
  \popQED\endtrivlist\@endpefalse
}
\renewcommand*\@maketitle{%
  \normalfont\normalsize
  \@adminfootnotes
  \@mkboth{\@nx\shortauthors}{\@nx\shorttitle}%
  \global\topskip42\p@\relax 
  \@settitle
  \ifx\@empty\authors \else {\vskip 1em
\vtop{\centering\shortauthors\@@par}} \fi
  \ifx\@empty\@date \else {\vskip 1em \vtop{\centering\@date\@@par}}\fi 
  \ifx\@empty\@dedicatory
  \else
    \baselineskip18\p@
    \vtop{\centering{\footnotesize\itshape\@dedicatory\@@par}%
      \global\dimen@i\prevdepth}\prevdepth\dimen@i
  \fi
  \@setabstract
  \normalsize
  \if@titlepage
    \newpage
  \else
    \dimen@34\p@ \advance\dimen@-\baselineskip
    \vskip\dimen@\relax
  \fi
} 
\renewcommand*\@adminfootnotes{%
  \let\@makefnmark\relax  \let\@thefnmark\relax
  \ifx\@empty\@subjclass\else \@footnotetext{\@setsubjclass}\fi
  \ifx\@empty\@keywords\else \@footnotetext{\@setkeywords}\fi
  \ifx\@empty\thankses\else \@footnotetext{%
    \def\par{\let\par\@par}\@setthanks}%
  \fi
\thispagestyle{titlepage}
}
\begin{document}

\title[Bounded flats]{Pairwise balanced designs covered by bounded flats}

\author{Nicholas M.A.~Benson}

\author{Peter J.~Dukes}
\address{\rm 
Mathematics and Statistics,
University of Victoria, Victoria, Canada
}
\email{bensonn@uvic.ca, dukes@uvic.ca}

\thanks{Research of the authors is supported by NSERC}

\date{\today}

\begin{abstract}
We prove that for any $K$ and $d$, there exist, for all sufficiently large admissible $v$, a pairwise balanced design PBD$(v,K)$ of dimension $d$ for which all $d$-point-generated flats are bounded by a constant independent of $v$.  We also tighten a prior upper bound for $K = \{3,4,5\}$, in which case there are no divisibility restrictions on the number of points.  One consequence of this latter result is the construction of latin squares `covered' by small subsquares.
\end{abstract}


\maketitle
\hrule

\section{Introduction}

A \textit{pairwise balanced design} is a pair $(X,\cB)$, where $X$ is a set of points and $\cB \subseteq 2^X$ is a family of \emph{blocks} which cover every pair of different points exactly once.  The standard notation is PBD$(v,K)$, where $v = |X|$ and $K \subseteq \Z_{\ge 2}$ contains the allowed block sizes.
Closely related objects are Steiner systems, balanced incomplete block designs, and linear spaces.

Since the set of blocks incident with any point must contain each other point once, and since the set of pairs of points must partition into the pairs covered in each block, we have the `divisibility' conditions
\begin{align}
\label{local}
\alpha(K) & \mid v-1~~\text{and}\\
\label{global} 
\beta(K) & \mid v(v-1),
\end{align}
where $\alpha(K):=\gcd\{k-1: k \in K\}$ and $\beta(K):=\gcd\{k(k-1): k \in K\}$.  The integers $v$ satisfying (\ref{local}) and (\ref{global}) are \emph{admissible}.
An important theorem of R.M.~Wilson states that admissibility is suficient for existence of a PBD$(v,K)$, provided $v$ is large.

\begin{thm}[Wilson, \cite{BIB:Wilson}]
\label{THM:AsymptoticPBD}
There exist PBD$(v,K)$ for all sufficiently large admissible $v$.
\end{thm}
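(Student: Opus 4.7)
My plan would be to prove this via a combination of PBD-closure and Wilson's Fundamental Construction (WFC). I would first define $B(K) := \{v \geq 1 : \text{a PBD}(v,K)\ \text{exists}\}$ and establish that this set is \emph{PBD-closed}: whenever a PBD$(w,B(K))$ exists, then $w \in B(K)$, obtained by substituting a smaller PBD on each outer block. Consequently, it suffices to produce enough integers in $B(K)$ so that the PBD-closure covers all sufficiently large admissible $v$; in particular, one may freely enlarge the block set $K$ along the way.

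The key construction tool is WFC for group divisible designs: given a master GDD with a nonnegative weighting of its points, one inflates each block into a transversal design indexed by the weighted coordinates, producing a new GDD whose group sizes are weighted sums. This relies on the existence of enough mutually orthogonal Latin squares---equivalently, transversal designs TD$(k,n)$ for large $n$---which I would invoke from the Chowla--Erd\H{o}s--Straus asymptotic MOLS theorem. Combined with auxiliary moves (truncating a group, adjoining an ideal point, filling groups with small PBDs), WFC manufactures PBDs on a dense family of point sets from a small supply of ingredients.

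The main obstacle, and the technical heart of the argument, is engineering the parameters so that every admissible residue class modulo $\beta(K)$ is represented in $B(K)$. For each such residue $r$, I would build at least one PBD$(v,K)$ with $v \equiv r \pmod{\beta(K)}$ by taking a large TD$(k,n)$ with $n$ a suitably chosen prime power and applying truncations and point-adjunctions, possibly supplemented by small ingredient PBDs to repair residues. Once a single representative appears in each admissible class, an inductive bootstrap via WFC fills out an unbounded arithmetic progression within the class, and PBD-closure then delivers all sufficiently large admissible $v$.

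I expect the hardest step to be arranging the arithmetic so that the construction realizes precisely the admissible residues rather than merely a proper subset of them. This will require interlocking the divisibility conditions $\alpha(K) \mid v-1$ and $\beta(K) \mid v(v-1)$ with the parameter choices in WFC and its ingredient designs. My plan to address this is a careful analysis of how small PBD-closed sets behave under repeated inflation, together with an explicit construction of seed designs in each needed residue class; once this arithmetic-combinatorial matching is set up, the remaining verifications should reduce to routine applications of WFC.
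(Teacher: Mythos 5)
The paper does not prove this statement at all: Theorem~\ref{THM:AsymptoticPBD} is imported verbatim from R.~M.~Wilson's paper \cite{BIB:Wilson} and used as a black box, so there is no internal proof to compare against. Your outline does correctly reproduce the architecture of Wilson's actual argument in the literature: PBD-closure of $B(K)$, Wilson's fundamental construction with transversal-design ingredients supplied by the Chowla--Erd\H{o}s--Straus asymptotic MOLS theorem, truncation and point-adjunction, and a final residue-class analysis modulo $\beta(K)$.

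That said, as a proof rather than a roadmap there is a genuine gap, and it sits exactly where you say you expect the difficulty to be. The assertion that ``for each admissible residue $r$ I would build at least one PBD$(v,K)$ with $v \equiv r \pmod{\beta(K)}$ by taking a large TD and applying truncations and point-adjunctions, possibly supplemented by small ingredient PBDs'' is not an argument; it is the entire content of Wilson's two long papers. The hard facts one must actually establish are: (i) that $B(K)$, being PBD-closed, is \emph{eventually periodic} with period dividing $\beta(K)$ --- this requires the structure theory of PBD-closed sets, not just the closure property itself; (ii) that seed designs exist in \emph{every} admissible residue class, which forces delicate number-theoretic choices of prime powers and truncation sizes and cannot be dismissed as ``routine''; and (iii) that the bootstrap really yields all sufficiently large members of each class rather than a sparse subprogression --- an unbounded arithmetic progression inside a class plus PBD-closure does not by itself give everything large in that class without a further generating-set argument. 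One smaller caution: ``one may freely enlarge the block set $K$'' is only legitimate if the enlarged block sizes all lie in $B(K)$, so that they can be broken back down into genuine $K$-blocks at the end; enlarging $K$ itself would change $\alpha(K)$ and $\beta(K)$ and hence the admissibility conditions you are trying to match. In short: right skeleton, but the load-bearing steps are named rather than proved.
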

 
Let $(X,\cB)$ be a pairwise balanced design.  A \emph{flat} (or \emph{subdesign}) is a pair $(Y,\cB|_Y)$, where $Y \subseteq X$ and $\cB|_Y:=\{B \in \cB: B \subseteq Y\}$ have the property that any two distinct points in $Y$ are together in a unique block of $\cB|_Y$.  Flats in $(X,\cB)$ form a lattice under intersection.  As such, any set of points $S \subseteq X$ generates a flat $\langle S \rangle$ equal to the intersection of all flats containing $S$.  Alternatively, $\langle S \rangle$ can be computed algorithmically starting from $S$ by repeatedly including points on blocks defined by previously included pairs.

The {\em dimension} of a PBD is the maximum integer $d$ such that any set of $d$ points generates a proper flat. 
This definition, is taken from the context of linear spaces; see \cite{BIB:Delan}.
For example, the flat generated by any two points is the line containing them.  So every PBD$(v,K)$ with more than one block has dimension at least two.  

The classical geometries come with nontrivial dimension. Let $q$ be a prime power and $\mathbb{F}_q$ the finite field of order $q$.  We can take as point set
the vector space $X=\F_q^{d}$, and as flats all possible translates $x+W$ of subspaces 
$W$ in $X$.  This forms the {\em affine space} AG$_d(q)$.  Viewing the one-dimensional flats as blocks, we obtain a PBD$(q^d,\{q\})$ 
of dimension $d$.  For example, the case $q=3$, $d=4$ recovers the popular card game `Set'.

Likewise, the set of direction vectors $(\F_q^{d+1} \setminus \{ \mathbf{0} \})/\F_q^*$ induces the \emph{projective space} PG$_d(q)$.  There are $[d]_q:=1+q+\dots+q^d$ projective points in total.  Blocks are projective lines defined by two-dimensional subspaces of $\F_q^{d+1}$, and it is easy to see these have size $q+1$.  As indicated by the notation, the dimension of PG$_d(q)$ as a PBD is also $d$.

In the binary case, PG$_2(2)$ is the familiar PBD$(7,\{3\})$ or `Fano plane'.   Increasing the dimension, PG$_3(2)$ is a PBD$(15,\{3\})$ such that any three points are either collinear or define a Fano plane.  Teirlinck in \cite{BIB:Teirlinck} was a key early investigator of dimension in general Steiner triple systems ($K=\{3\}$).

For general $K$ and any desired minimum dimension $d$, there is a recent existence theory in the spirit of Theorem~\ref{THM:AsymptoticPBD}.

\begin{thm}[\cite{BIB:PrescribedMinimumDimension}]
Given $K \subseteq \Z_{\ge 2}$ and $d \in \Z_+$, there exists a PBD$(v,K)$ of dimension at least $d$ for all sufficiently large admissible $v$.
\end{thm}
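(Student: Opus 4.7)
My plan is to build a PBD of dimension $\ge d$ by inflating a small ``seed'' through a fiber product over a classical affine geometry, using Theorem~\ref{THM:AsymptoticPBD} to supply ingredients and a refinement lemma to convert between block-size sets. The key tool throughout is what I will call the \emph{refinement lemma}: if $(X,\cB')$ is obtained from a PBD $(X,\cB)$ by replacing each block $B$ by the blocks of a PBD$(|B|,K)$ on $B$, then every $\cB$-flat is also a $\cB'$-flat, since the $\cB'$-block through a pair $\{x,y\}$ is contained in the $\cB$-block through $\{x,y\}$. Consequently $\langle S\rangle_{\cB'}\subseteq\langle S\rangle_{\cB}$, so refining blocks never decreases dimension.

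For the seed, I choose a prime power $q$, large enough (by Theorem~\ref{THM:AsymptoticPBD}) that a PBD$(q,K)$ exists and satisfying the residue conditions which make $q^d$ admissible for $K$. Start with $X_0=\mathrm{AG}_d(q)$, a PBD$(q^d,\{q\})$ of dimension exactly $d$, and refine each of its lines by a PBD$(q,K)$. The refinement lemma immediately yields a PBD$(q^d,K)$ of dimension $\ge d$.

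The main step is a fiber-product inflation. Put $X=X_0\times[m]$ and equip it with two types of blocks: \textbf{(a)} for each line $L$ of $X_0$, transversal blocks from a $\mathrm{TD}(q,m)$ on $L\times[m]$, subsequently refined by PBDs of size $q$ with blocks in $K$; \textbf{(b)} for each $x\in X_0$, blocks from a PBD$(m,K)$ filling the fiber $\{x\}\times[m]$. This is a PBD$(q^dm,K)$ as soon as the ingredient $\mathrm{TD}(q,m)$ and PBD$(m,K)$ exist, both of which hold for $m$ sufficiently large (by Theorem~\ref{THM:AsymptoticPBD} and standard MOLS bounds). For the dimension, consider the projection $\pi:X\to X_0$. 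Type (a) blocks project bijectively onto lines of $X_0$, while type (b) blocks project to single points. Iterating, one shows $\pi(\langle S\rangle)=\langle\pi(S)\rangle_{X_0}$ for every $S\subseteq X$, simply because each step of the algorithmic closure of $\langle S\rangle$ either adds a transversal (projecting to the line through the two seeds in $X_0$) or stays in a fiber. For $|S|\le d$ we have $|\pi(S)|\le d$, so $\langle\pi(S)\rangle_{X_0}$ is a proper flat of $X_0$; pulling back, $\langle S\rangle\subseteq\pi^{-1}(\langle\pi(S)\rangle_{X_0})\subsetneq X$, so the inflated design has dimension $\ge d$.

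The main obstacle is admissibility bookkeeping: not every large admissible $v$ has the form $q^d m$, so this construction, as written, covers only specific residue classes. I would handle this in the standard Wilson style, by combining the fiber-product construction above with a finite family of ``ingredient'' designs of dimension $\ge d$ in order to realize every residue of $v$ modulo $\lcm(\alpha(K),\beta(K))$, then absorb any leftover points via filling-by-groups or extension arguments that re-invoke Theorem~\ref{THM:AsymptoticPBD}. The refinement lemma makes this bookkeeping compatible with dimension, since each step only refines blocks or inflates fibers in ways that admit a projection argument like the one above.
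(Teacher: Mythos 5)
Your core construction is sound and is essentially the route the paper itself takes for its stronger Theorem~\ref{THM:Asymptotic} (and that the cited source takes for this statement): inflate an affine geometry via Wilson's fundamental construction (Construction~\ref{CON:WFC}), fill the fibers (Construction~\ref{CON:fill}), and control dimension by noting that $\pi^{-1}(\langle\pi(S)\rangle_{X_0})$ is a flat containing $S$, hence $\langle S\rangle$ is proper whenever $|\pi(S)|\le d$. Your refinement lemma is correct, and for the projection step you only need the containment $\langle S\rangle\subseteq\pi^{-1}(\langle\pi(S)\rangle_{X_0})$, not the equality you assert.

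The genuine gap is your final paragraph. Constant-weight inflation produces only $v=q^dm$, and as $m$ ranges over orders admitting both a PBD$(m,K)$ and a TD$(q,m)$, consecutive achievable values of $v$ differ by roughly $q^d\alpha(K)$. So the obstruction is not ``realizing every residue modulo $\lcm(\alpha(K),\beta(K))$'': you must hit every sufficiently large admissible \emph{integer}, i.e.\ fill intervals of length about $q^d\alpha(K)$, not residue classes. This is precisely what the paper's Section~\ref{SEC:AsymptoticsConstruction} is engineered to do: the non-uniform ingredient GDDs of Lemmas~\ref{ING:one} and~\ref{ING:two} allow the fiber sizes to be $m$, $m+b$, and one exceptional $m+x$ with $x\in[0,b]$, so that the resulting point count $mq^e+b(q^e-j-1)+x$ in (\ref{eqGDD}) sweeps out a full interval as $j$ and $x$ vary, with overlapping intervals for consecutive valid $m$. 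Your alternative suggestions (adding a common flat of size $h$ to all fibers, or ``absorbing leftover points'') could in principle supply comparable flexibility, but each reintroduces exactly the issues you would need to check --- that the required PBD$(m+h,K)$ with an aligned flat of size $h$ exists for an entire interval of $h$, and that the projection argument survives when $d$ of the chosen points may lie in the shared flat. Some concrete mechanism of this kind, with the accompanying verification that non-uniform weights and the filling step preserve the sub-GDD structure, is the technical heart of the theorem and is missing from your sketch.
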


In this paper, though, we are interested in a strengthening in which we (universally) bound all $d$-point-generated flats.  Here is the statement of our first main result in this direction.

\begin{thm}
\label{THM:Asymptotic}
For a given $K$ and $d$, there exists, for all sufficiently large admissible $v$, a PBD$(v,K)$ such that any $d$ points generate a flat of size at most $f(d, K)$, a constant independent of $v$.
\end{thm}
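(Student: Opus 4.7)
The plan is to combine Wilson's asymptotic theorem with a composition scheme that preserves bounded $d$-generated flats. Fix $K$ and $d$ throughout.

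First I would construct a base design: a PBD on some fixed number of points $v_0$ with all $d$-generated flats bounded by an absolute constant $f_0 = f_0(d,K)$. A natural candidate is drawn from affine geometry. Take the affine space AG$_m(q)$ for a prime power $q$ chosen compatibly with $K$ and some fixed $m \ge d$; any $d$ points there generate an affine flat of dimension at most $d-1$, hence of size at most $q^{d-1}$. When $q \notin K$, convert the $\{q\}$-blocks into $K$-blocks by filling each line with a small $K$-PBD (via Theorem \ref{THM:AsymptoticPBD} or known small examples) without disturbing the closure structure on points. This yields a PBD$(v_0, K)$ in which every $d$-generated flat has size at most $f_0$.

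Second, for each sufficiently large admissible $v$, I would extend via Wilson's fundamental construction: take a master group-divisible design on roughly $v$ points with $K$-blocks and groups of bounded size, replace each group with a copy of the base PBD, and take the union of the block families. Standard admissibility and divisibility arguments in Wilson's framework ensure this is realizable for all sufficiently large admissible $v$.

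The main obstacle, and the step requiring the most care, is verifying that $d$-generated flats remain bounded in the composed PBD. Given any $d$ points, the flat $F$ they generate intersects each group in a sub-flat of the base design, hence in at most $f_0$ points; the delicate task is to bound the number of groups that $F$ meets. This forces the master to itself have bounded flat structure, so one should choose it from a highly structured family (another affine or projective space, or iteratively the same construction at a smaller scale). The worry is cascading closure: a master block can contribute one new point to a group, which combines with points already there via an ingredient block to produce further points, which then feed into new master blocks. One must argue that this cascade stabilizes after a bounded number of rounds, yielding a final bound $f(d, K)$ depending only on $d$, $K$, and $f_0$. An inductive setup on $d$, combined with careful bookkeeping on how many ``levels'' of cross-group closure can occur, is the natural route to close this loop.
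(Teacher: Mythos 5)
Your architecture matches the paper's in outline: an affine-space master, Wilson's fundamental construction, and a final group-filling step. But the two places you flag as ``requiring the most care'' are exactly where the proof lives, and neither is closed. The cascading-closure problem is not resolved by arguing that the cascade ``stabilizes after a bounded number of rounds'': in general it need not, since each new point absorbed into a group can pair with flat points in other groups through master blocks, and nothing bounds the number of rounds a priori. The paper avoids the cascade entirely rather than bounding it. Because WFC turns sub-GDDs of the master into sub-GDDs of the output, the affine structure of AG$_e(q)$ guarantees that every $d$-point-generated sub-GDD of the big GDD touches at most $q^d$ groups \emph{and meets each group trivially} (entirely or not at all). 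The paper's lemma on filling groups then applies: $Y$ stays a flat after filling provided $Y \cap X_i$ is a flat of the filling design, which is automatic when that intersection is all of $X_i$ or empty. Without arranging this trivial-intersection property, your composition step is not justified, and an induction on ``levels of cross-group closure'' is not a viable substitute.

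The second gap is the arithmetic. ``Standard admissibility and divisibility arguments'' do not deliver all sufficiently large admissible $v$: if the master is a fixed affine space and the groups have bounded size, the achievable point counts form a sparse set. The bulk of the paper's Section 3 (its two ingredient lemmas) is devoted to building non-uniform GDDs of type $m^i(m+b)^{u-i-1}(m+x)^1$ with block sizes $\{b,b+1\}$, precisely so that after applying WFC over AG$_e(q)$, inflating by $\alpha$, and filling groups with one added point, the total $\alpha(mq^e + b(q^e-j-1)+x)+1$ sweeps out every large admissible value as $j \in [0,q^e-1]$, $x \in [0,b]$, $m$, and $e$ vary. This granularity construction, together with the check that the flat bound $\alpha q^d(qr_0+b)+1$ is independent of $v$, is the technical heart of the theorem and is absent from your sketch.
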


This is perhaps surprising at first glance, and accordingly there are technical challenges to overcome in the proof.  The necessary background and proof are covered in Sections \ref{SEC:GDDs} and \ref{SEC:AsymptoticsConstruction} to follow.

The set $K=\{3,4,5\}$ of block sizes  is of special interest to us.  First, the divisibility conditions disappear in this case, so that all positive integers $v$ are admissible.  (The only exceptions to existence are $v=2,6,8$.)  Second, idempotent quasigroups can be constructed from this $K$ by a simple gluing operation.  As we see later, universally bounded flats lead to some interesting extremal objects, such as latin squares covered by small subsquares and one-factorizations of the complete bipartite graph $K_{n,n}$ with only short cycles.  This motivates our second main result.

\begin{thm}
\label{THM:345}
There exist PBD$(v,\{3,4,5\})$ for all $v \neq 2,6,8$ such that any three points generate a flat of size at most 
$63$, unless $v-[e]_4 \in \{1,3,9\}$ for some integer $e \ge 3$, in which case any three points generate a flat of size at most $94$.
\end{thm}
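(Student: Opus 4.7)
The approach specializes the general construction developed in Section \ref{SEC:AsymptoticsConstruction} to the alphabet $K = \{3,4,5\}$, but tracks explicit constants rather than asymptotic ones. The key building block is the projective geometry PG$_e(4)$, a PBD$([e]_4,\{5\})$ whose every flat is a sub-projective-geometry over $\F_4$; in particular, a 3-generated flat in PG$_e(4)$ is either a line (size 5) or a plane PG$_2(4)$ (size 21). When $v = [e]_4$, taking PG$_e(4)$ directly achieves the stronger bound 21.

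For general admissible $v$, set $e = \max\{f : [f]_4 \le v\}$ and write $v = [e]_4 + r$ with $r \in \{0, 1, \ldots, 4^{e+1}-1\}$. When $r \notin \{1, 3, 9\}$, we perform a filling on a master $\{3,4,5\}$-GDD obtained, for instance, by truncating a transversal design TD$(5, m)$ or by near-resolving a projective-geometry design so that the group sizes total $v$. Each group is filled by a combination of a copy of PG$_2(4)$, individual lines PG$_1(4)$, and, in one designated group, a small residual PBD on $\{3,4,5\}$; inductively, every ingredient has 3-generated flats of size at most 21. A case analysis on the closure algorithm then shows that three points in the composite PBD generate a flat meeting at most three such ingredients, yielding the bound $3 \cdot 21 = 63$.

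When $r \in \{1, 3, 9\}$ and $e \ge 3$, the filling above is obstructed because residual PBDs of the required sizes do not embed into the master GDD's resolvable structure. In this case we climb one dimension, starting from PG$_{e+1}(4)$: delete the $4^{e+1} - r$ excess points from a carefully arranged sub-flat and patch the resulting holes with small blocks along the deleted lines. A 3-generated flat may now encompass an entire PG$_3(4)$ together with the up to nine residual points, yielding the weaker bound $[3]_4 + 9 = 94$. For $e \le 2$, the corresponding small orders $v \in \{4, 10, 14, 22, 24, 30\}$ are handled by explicit direct constructions meeting the 63 bound.

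The principal technical obstacle is controlling the propagation of a 3-generated flat across the master GDD: starting from three points in distinct groups, the closure algorithm may successively introduce new points in further groups, and one must prove that this process terminates within the stated bound. This is verified by analyzing how lines in PG$_e(4)$ interact with the GDD transversals, together with inductive control on the 3-closures of each ingredient.
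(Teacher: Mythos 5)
Your proposal correctly identifies PG$_e(4)$ as the engine and even lands on the right constants, but the constructions you describe either do not work as stated or omit the step that carries all the difficulty. The paper's bound of $63$ comes from applying Wilson's fundamental construction to PG$_d(4)$ with weights $1$ and $3$: any three points of the inflated design lie over at most a plane of PG$_d(4)$, so the generated flat sits inside the inflation of a $21$-point plane, giving $3\times 21=63$. Your route to $63$ --- a master GDD ``obtained by truncating a TD$(5,m)$ or by near-resolving a projective-geometry design,'' followed by the assertion that a $3$-generated flat ``meets at most three ingredients'' --- is unjustified, and for a TD-based master it is generally false: the closure of three points in general position propagates across many groups, which is exactly the obstacle you flag in your last paragraph but never resolve. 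You also do not address parity: weighting PG$_d(4)$ by $1$ and $3$ only produces odd $v$, and reaching the even values (and isolating the exceptions $[d]_4+\{1,3,9\}$) requires the more delicate weighting after truncating one point so that each of the $[d-1]_4$ residual $4$-lines carries $0$, $3$, or $4$ tripled points.

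The other half of the theorem, covering $3[d-1]_4\le v\le [d]_4$, is obtained in the paper by \emph{legal truncation} of PG$_d(4)$: one must delete points so that no line is left with exactly two points, since a $2$-element block is not permitted. Your sketch (``delete the excess points from a carefully arranged sub-flat and patch the resulting holes with small blocks'') ignores this constraint entirely --- deleted pairs cannot be ``patched,'' because every surviving pair is already covered by its truncated line, and a line reduced to two points simply violates $K=\{3,4,5\}$. Establishing which deletion sizes are legally achievable is the content of the paper's spine/page decomposition, the gluing lemma, and the interval analysis of the sets $S(d,i)$; none of that is replaced by anything in your argument. Finally, your $94$ is a numerical coincidence: you compute $[3]_4+9=94$ from a hypothetical ``PG$_3(4)$ plus nine residual points,'' whereas the paper gets $4\times 21+10=94$ by weighting PG$_{d-1}(4)$ with weights $4$ and $5$ (giving $2$, $4$, or $10$ points weight $5$) so that a $3$-generated flat lies over a plane inflated by $4$ plus at most ten extra points. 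The agreement of the two numbers does not validate the construction behind yours.
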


This is proved in Section 4 by carefully truncating and inflating points in the projective spaces over $\F_4$.  In fact, our construction also upper-bounds all $d$-point-generated flats for arbitrary $d$, but (as expected) this bound grows with $d$.  To put the result in context, our bound of 63  is only three times the smallest nontrivial PBD$(v,\{5\})$, this being PG$_2(4)$.  It appears difficult to eliminate the sparse family of exceptions which escape this bound.  The first three, though, were settled in \cite{BIB:NiezenThesis}, where it was proved that a PBD$(v,\{3,4,5\})$ of dimension three exists for all $v \ge 48$.  Since proper flats in such a PBD have size less than $v/2$, we could actually write `$e \ge 4$' in Theorem~\ref{THM:345}.

\section{Group divisible designs}
\label{SEC:GDDs}

This section develops the necessary background for our proof of Theorem~\ref{THM:Asymptotic}. 

A \emph{group divisible design}, or GDD, is a triple $(X,\Pi,\cB)$, where $X$ is a set of \emph{points}, $\Pi$ is a partition of $X$ into \emph{groups} (there need not be algebraic structure), and $\cB$ is a set of \emph{blocks} such that 
\vspace{-11pt}
\begin{itemize}
\item
a group and a block intersect in at most one point; and
\item
every pair of points from distinct groups is together in exactly one block.
\end{itemize}
\vspace{-11pt}
Writing $T$ for the list of group sizes, we adopt the notation GDD$(T,K)$ for similarity with the notation for PBDs.  Typically, $T$ is called the \emph{type} of the GDD.  When $T$ contains, say, $u$ copies of the integer $g$, this is abbreviated with `exponential notation' as $g^u$.  If the type is just $g^u$ for some $g,u$, the resulting GDD is called \emph{uniform}.   A GDD$(1^v,K)$ is just a PBD$(v,K)$.  Another abbreviation we shall use is to write simply `$k$' instead of `$\{k\}$' in the notation.  With this in mind, a GDD$(n^k,k)$ is equivalent to a set of $k-2$ mutually orthogonal latin squares of order $n$, where two groups index rows and columns, and each other group defines a square. 

Simple counting reveals the necessary divisibility conditions 
\begin{equation}
\label{nec-gdd}
k-1 \mid g(u-1)~~\text{and}~~k(k-1) \mid g^2 u(u-1)
\end{equation}
on GDD$(g^u,k)$.  We now cite two useful asymptotic existence results, one for each parameter.

\begin{thm}[\cite{BIB:AsymptoticGDD}]
\label{THM:TallGDD}
Given integers $u \ge k \ge 2$, there exists a GDD$(g^u,k)$ for all sufficiently large integers $g$ satisfying \eqref{nec-gdd}.
\end{thm}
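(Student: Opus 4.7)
The plan is to reduce the problem to Wilson's asymptotic PBD theorem (Theorem~\ref{THM:AsymptoticPBD}) combined with Wilson's Fundamental Construction (WFC) and the asymptotic abundance of mutually orthogonal latin squares.

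First I would dispose of the base case $u=k$. A GDD$(g^k,k)$ is precisely a transversal design TD$(k,g)$, equivalently the existence of $k-2$ mutually orthogonal latin squares of order $g$. By MacNeish's product construction together with $N(q)=q-1$ for prime powers $q$, one has $N(g)\to\infty$, so at least $k-2$ MOLS of order $g$ exist for all sufficiently large $g$; and \eqref{nec-gdd} becomes vacuous when $u=k$.

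For $u>k$ the strategy is two-stage: seeding followed by inflation. Inflation is easy: given one seed GDD$(g_0^u,k)$ and a TD$(k,h)$, WFC (weight every point of the seed by $h$ and plug in the TD as the ingredient on each block) produces a GDD$((g_0 h)^u,k)$. Since TD$(k,h)$ exists for all sufficiently large $h$, each fixed seed $g_0$ yields the desired GDD for all sufficiently large multiples of $g_0$. To obtain seeds I would apply Theorem~\ref{THM:AsymptoticPBD} to $K=\{k,u+1\}$: in a PBD$(g_0 u+1,\{k,u+1\})$ that contains a distinguished point $\infty$ lying only on blocks of size $u+1$, deletion of $\infty$ turns those $u$ blocks into groups of size $g_0$ and all remaining blocks have size $k$, producing a GDD$(g_0^u,k)$. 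A PBD of this form can be arranged by standard ``breaking up blocks at a point'' arguments once $g_0$ is large enough and satisfies the relevant local divisibility.

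Finally, to cover every admissible $g$ (not just multiples of a single seed), I would assemble a finite collection of seeds $\{g_0\}$ with one representative in each residue class of $g$ modulo $m:=\operatorname{lcm}$ of the divisors appearing in \eqref{nec-gdd}, and then combine seeds with inflation so that the resulting arithmetic progressions jointly cover every admissible residue class above some threshold. The main obstacle is precisely this seeding step: verifying that the PBDs of the form above can be produced for at least one admissible $g_0$ in each residue class modulo $m$. This requires keeping simultaneous track of both divisibility conditions in \eqref{nec-gdd} and of the parallel-class structure at $\infty$, and it is where most of the delicate bookkeeping of the proof lives; once a complete set of seeds is in hand, the inflation step and the $u=k$ base case together finish the argument.
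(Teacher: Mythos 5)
First, note that the paper does not prove this statement: it is imported from Mohacsy \cite{BIB:AsymptoticGDD}, where the proof is a substantial recursive argument; the fixed-$u$, growing-$g$ regime does not reduce in any known easy way to Wilson's PBD theorem, and your proposal does not close that gap. Your base case $u=k$ via MOLS is fine, and the inflation step (weighting a seed GDD$(g_0^u,k)$ by $h$ and plugging in TD$(k,h)$'s) is correct. The problems are in the other two steps.

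The seeding step is wrong as written. If $\infty$ lies only on blocks of size $u+1$ in a PBD$(g_0u+1,\{k,u+1\})$, then deleting $\infty$ turns each such block into a group of size $u$, and since those blocks partition the remaining $g_0u$ points there are $g_0$ of them: you obtain a GDD of type $u^{g_0}$, i.e.\ many groups of fixed size $u$, which is the ``wide'' direction already covered by Theorem~\ref{THM:WideGDD}, not the ``tall'' type $g_0^u$ you need. To produce type $g_0^u$ by point deletion you would need $u$ blocks of size $g_0+1$ through $\infty$, forcing $g_0+1\in K$; but Theorem~\ref{THM:AsymptoticPBD} requires $K$ to be fixed before letting $v\to\infty$, so this cannot deliver arbitrarily large $g_0$. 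Separately, even granting seeds, the covering argument fails: inflating a seed $g_0$ yields only the multiples $\{g_0h : h \gg 0\}$, and a finite union of such sets of multiples cannot contain all sufficiently large admissible $g$. For instance, when $u\equiv 1 \pmod{k(k-1)}$ the conditions \eqref{nec-gdd} are vacuous, so every large prime $g$ is admissible, yet a large prime is a multiple of no fixed seed $g_0>1$. Some additive mechanism (non-uniform ingredient GDDs, or filling holes of varying size) is indispensable here, and supplying it is essentially where the real work of \cite{BIB:AsymptoticGDD} lies.
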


\begin{thm}[\cite{BIB:Chang}]
\label{THM:WideGDD}
Given $k$ and $g$, there exists a GDD$(g^u,k)$ for all sufficiently large integers $u$ satisfying \eqref{nec-gdd}.
\end{thm}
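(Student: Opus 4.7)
The plan is to imitate Wilson's proof of Theorem~\ref{THM:AsymptoticPBD}, applying his Fundamental Construction to pass from a master design to a GDD whose points have been inflated to groups of size $g$. Given $k$ and $g$ fixed, I would first assemble a set $K^* \subseteq \Z_{\ge k}$ of block sizes $k'$ for which the ingredient GDD$(g^{k'},k)$ can be built directly---for instance via truncated transversal designs, inflations of small PBDs, deletions from affine or projective spaces over $\F_q$, and any ad-hoc constructions available for small parameters. Then Wilson's theorem supplies a PBD$(u, K^*)$ for all large $u$ admissible with respect to $K^*$; replacing each point by $g$ copies and each block of size $k'$ by a copy of the corresponding ingredient GDD produces a GDD$(g^u, k)$, and a routine arithmetic check shows the resulting admissibility matches \eqref{nec-gdd}.

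Two subtleties demand care. First, $K^*$ must be chosen so that $\alpha(K^*)$ and $\beta(K^*)$ force $u$ to satisfy precisely the GDD admissibility conditions in \eqref{nec-gdd}, rather than an artificially stronger set. This is arranged by deliberately including in $K^*$ a short list of small $k'$ whose values of $k'-1$ have the right gcd; often a carefully chosen pair $\{k', k''\}$ already pins down $\alpha(K^*)$ and $\beta(K^*)$ at the target values. Second, Wilson's theorem only provides the master PBD for $u$ beyond some threshold $u_0$, so a finite set of admissible $u < u_0$ must be settled separately, typically by short recursions (e.g.\ a ``fill-in-the-holes'' construction: delete a hole from a larger GDD built by the same machine and patch the missing part by hand).

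The main obstacle I expect is assembling a sufficiently rich $K^*$ when $g$ is small. Standard constructions of GDD$(g^{k'}, k)$ rest on transversal designs TD$(k', g)$ or MOLS of order $g$, both of which are scarce or nonexistent for small values (notoriously $g = 2, 6$). In these cases one must substitute finer constructions such as resolvable group-divisible packings, frames, or truncations of projective planes of small order, and then re-verify that the resulting $K^*$ is still rich enough (in the gcd sense above) for the Wilson-inflation plan to close. All of the interesting work lies in curating this finite seed of ingredient designs; once it is in hand, the recursion is essentially mechanical.
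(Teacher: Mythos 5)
This theorem is quoted from Chang's thesis \cite{BIB:Chang}; the paper supplies no proof of it, so there is nothing internal to compare your argument against. Judged on its own, your outline is the standard (and correct) strategy for results of this type: the set $G=\{u:\text{GDD}(g^u,k) \text{ exists}\}$ is PBD-closed, since giving every point of a PBD$(u,K^*)$ weight $g$ and replacing each block of size $k'$ by an ingredient GDD$(g^{k'},k)$ is exactly Construction~\ref{CON:WFC} with constant weight, and Wilson's closure theory then yields all sufficiently large $u$ with $\alpha(G)\mid u-1$ and $\beta(G)\mid u(u-1)$. Rewriting \eqref{nec-gdd}, the target admissibility set is $\{u: \alpha'\mid u-1,\ \beta'\mid u(u-1)\}$ with $\alpha'=(k-1)/\gcd(k-1,g)$ and $\beta'=k(k-1)/\gcd(k(k-1),g^2)$, so the whole theorem reduces to showing $\alpha(G)=\alpha'$ and $\beta(G)=\beta'$.

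That reduction is where your proposal stops being a proof. Every element of $G$ automatically satisfies $\alpha'\mid k'-1$ and $\beta'\mid k'(k'-1)$, so $\alpha'\mid\alpha(G)$ and $\beta'\mid\beta(G)$ come for free; the entire content of the theorem is the reverse inequality, i.e.\ exhibiting, for \emph{every} pair $(g,k)$, finitely many explicit GDD$(g^{k'},k)$ whose parameters realize these gcds exactly. You acknowledge this ("curating the finite seed") and correctly flag that transversal designs and MOLS are inadequate when $g$ is small, but you do not give a construction or even a uniform recipe that provably succeeds for all $(g,k)$ --- for instance, a concrete family of $k'$ with $\gcd\{k'-1\}=\alpha'$ together with a verified source of the corresponding ingredient designs. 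Without that, the argument establishes only that \emph{if} a rich enough seed exists then the recursion closes, which is the easy half; the seed constructions are precisely what occupies Chang's thesis (and the later generalizations of Draganova and Liu cited as Theorem~\ref{asym-gdd}). As written, this is a correct proof plan with its load-bearing step left open.
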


In fact, there is a version of Theorem~\ref{THM:WideGDD} for multiple block sizes.

\begin{thm}[\cite{Draganova,Liu}]
\label{asym-gdd}
Given $g$ and $K \subseteq \Z_{\ge 2}$, there exists a GDD$(g^u,K)$ for all
sufficiently large $u$ satisfying
\begin{align}
\label{local-gdd}
\alpha(K) & \mid g(u-1) ~~\text{and}\\
\label{global-gdd}
\beta(K) & \mid g^2 u(u-1).
\end{align}
\end{thm}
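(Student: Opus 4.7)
The plan is to bootstrap from the single block-size asymptotic existence of Theorem~\ref{THM:WideGDD} via Wilson-style fundamental constructions.

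First, I would reduce to the case where $K$ is finite: pick $K_0 = \{k_1,\dots,k_s\} \subseteq K$ with $\alpha(K_0) = \alpha(K)$ and $\beta(K_0) = \beta(K)$. Only finitely many elements of $K$ are needed to realise these two gcds, so such a $K_0$ exists. Any GDD with block sizes in $K_0$ is also a GDD with block sizes in $K$, and the divisibility conditions coincide, so it suffices to prove the theorem with $K_0$ in place of $K$.

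Next, for each $k_i \in K_0$, Theorem~\ref{THM:WideGDD} yields a GDD$(g^u, k_i)$ for every sufficiently large $u$ satisfying the stronger per-$k_i$ conditions $k_i-1 \mid g(u-1)$ and $k_i(k_i-1) \mid g^2 u(u-1)$. These conditions cut out residue classes of $u$ modulo some $m_i$, whereas the joint weaker conditions \eqref{local-gdd} and \eqref{global-gdd} generally admit more classes; closing that gap is the main work.

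For the combining step, I would construct a master PBD on $u$ points via Theorem~\ref{THM:AsymptoticPBD} with an auxiliary block set $L$ chosen so that (i) Wilson's admissibility for $L$ accommodates the target $u$, and (ii) each $\ell \in L$ lies in a residue class where a ``filler'' GDD$(g^\ell, K_0)$ has already been constructed from the Step~2 ingredients (for instance by choosing $\ell$ so that one of the stronger per-$k_i$ conditions holds for $\ell$). Weighting each point of the master PBD by $g$ and substituting the fillers into the inflated blocks produces the required GDD$(g^u, K_0)$. A small number of exceptional residues can be dealt with by an ad hoc padding or truncation, e.g.\ deleting a bounded number of groups from a larger GDD.

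The main obstacle I anticipate is the arithmetic engineering of $L$ so that the master PBD exists for every admissible $u$ and simultaneously every $\ell \in L$ permits a filler. In practice one argues modulo $M := \lcm(m_1,\dots,m_s)$, verifying each admissible residue class of $u \pmod M$ in turn, and the interlocking arithmetic between $g$, the $k_i$, and these moduli is the delicate part of the Draganova--Liu argument.
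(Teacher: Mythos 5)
This is one of the results the paper imports from the literature (Draganova's dissertation and Liu's paper) rather than proves: there is no in-paper proof to compare against, so your attempt can only be judged on its own terms and against the cited sources. Your route --- bootstrap from the single-block-size Theorem~\ref{THM:WideGDD} and combine via a master design inflated by constant weight $g$ --- is the classical PBD-closure strategy (in effect you are arguing that $U_g := \{u : \text{GDD}(g^u,K) \text{ exists}\}$ is PBD-closed and then appealing to Wilson's structure theory for closed sets), and it is genuinely different from what Draganova and Liu actually do, which is to invoke the Lamken--Wilson asymptotic existence machinery for edge-colored graph decompositions. The closure route is more elementary in its ingredients; the edge-colored machinery handles the congruence bookkeeping uniformly and is why the published proofs do not need your ``arithmetic engineering'' step.

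That said, as written your proposal has two genuine gaps. First, the entire difficulty of the theorem is concentrated in the step you defer: you must show that the residue classes of $u$ reachable from the single-$k_i$ ingredients of Theorem~\ref{THM:WideGDD}, after taking PBD-closure, exhaust all classes satisfying \eqref{local-gdd} and \eqref{global-gdd}. Concretely this means verifying that $\alpha(U_g)$ and $\beta(U_g)$ (the gcd invariants of the closed set generated by your known ingredients) divide $u-1$ and $u(u-1)$ respectively for every target $u$; the $\alpha$-computation is a routine valuation argument, but the $\beta$-computation involves the gcd of $u(u-1)$ over a union of residue classes and is exactly the delicate calculation you would need to carry out rather than gesture at. Without it the proof is a plan, not a proof. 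Second, your fallback of ``deleting a bounded number of groups from a larger GDD'' to handle exceptional residues does not work: removing the points of a group truncates blocks, so a GDD$(g^{u'},K)$ loses the property that its block sizes lie in $K$. Any repair for stray residue classes has to come from explicitly constructed mixed-block-size ingredient GDDs (compare Lemmas~\ref{ING:one} and~\ref{ING:two}, where the authors go to considerable trouble to control which points may be truncated), not from naive deletion.
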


It is helpful to think of GDDs as `holey' PBDs, in the sense that groups of a GDD can be `filled' with appropriately-sized PBDs.

\begin{cons}[Filling groups]
\label{CON:fill}
Suppose there exists a GDD$(T,K)$ on $v$ points.\\
{\rm (a)} If, for each group size $g$ in $T$, there exists a PBD$(g,K)$, then there exists a PBD$(v,K)$.\\
{\rm (b)} If, for each group size $g$ in $T$, there exists a PBD$(g+1,K)$, then there exists a PBD$(v+1,K)$.
\end{cons}

\rk
In (b) above, we add a new a point at which every filled PBD intersects.  More generally, this point can instead be a common flat (say of size $h$) in each PBD, resulting in a PBD$(v+h,K)$.

Another feature of GDDs is that their groups admit a natural `inflation'.

\begin{cons}[Wilson's fundamental construction, \cite{ConsUses}]
\label{CON:WFC}
Suppose there exists a GDD $(X,\Pi,\cB)$, where $\Pi=\{X_1,\dots,X_u\}$.
Let $\omega:X \rightarrow \Z_{\ge 0}$, assigning nonnegative weights to each
point in such a way that for every $B \in \cB$ there exists a
GDD$([\omega(x) : x \in B],K)$.  Then there exists a GDD$(T,K)$, where
\begin{equation}
\label{WFC-output}
T=\left[\sum_{x \in X_1} \omega(x),\dots,\sum_{x \in X_u}
\omega(x)\right].
\end{equation}
\end{cons}

The idea in the above construction is that points of the original `master' GDD get weighted, and blocks get replaced by small `ingredient' GDDs.  There is one noteworthy special case.  A weighting with $\omega(x)=0$ or $1$ for all $x \in X$ is called a \emph{truncation}; in this case, blocks get replaced by smaller blocks.  A careful truncation has a mild (or possibly no) effect on the set of allowed block sizes $K$.

It is important for our purposes to extend the notion of flats to GDDs, and in particular to analyze the impact of the preceding constructions on them.  Given a GDD, say $(X,\Pi,\cB)$, a \emph{sub-GDD} (or \emph{subdesign}) is a triple $(Y,\Pi_Y,\cB_Y)$, where $\Pi_Y$ is the restriction of $\Pi$ to $Y$ and $\cB_Y:=\{B \in \cB: B \subseteq Y\}$.  This is a natural extension of the definition for pairwise balanced designs (in which $\Pi$ consists of singletons).  As before, single blocks define subdesigns.  And now, if all points of $Y$ belong to the same group, they trivially define a sub-GDD with no blocks.

Construction~\ref{CON:WFC} in a sense preserves sub-GDDs.  When a sub-GDD of the master is weighted, it becomes (by a smaller application of the the same construction) a GDD whose type is as in (\ref{WFC-output}), except where the summations restrict to $x \in X_1 \cap Y$, etc.  The following is now clear.

\begin{lemma}
Suppose a GDD $(X,\Pi,\cB)$ has the property that any $d$ points is contained in a sub-GDD touching at most $t$ groups.  Then the result of applying Wilson's fundamental construction, regardless of the weights or ingredient GDDs, has the same property.
\end{lemma}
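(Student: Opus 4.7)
My plan is to exploit the natural projection from the output of Wilson's fundamental construction down to the master GDD, together with the remark just preceding the lemma: restricting the construction to a sub-GDD $Y$ of the master (using the same weights and the same ingredient GDDs on blocks of $\cB_Y$) produces a sub-GDD of the output.

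First, given any set $S'$ of $d$ points in the output, I would observe that each point of $S'$ sits in the fiber over a unique $x \in X$, giving a projection $\pi \colon S' \to X$. Setting $S = \pi(S')$, we have $|S| \le d$, so by hypothesis there is a sub-GDD $(Y, \Pi_Y, \cB_Y)$ of the master with $S \subseteq Y$ that touches at most $t$ groups of $\Pi$.

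Next, I would let $Y'$ be the union of the fibers over points of $Y$, endowed with the structure inherited from the output: $\Pi_{Y'}$ restricts the output groups to $Y'$, and $\cB_{Y'}$ comprises all output blocks contained in $Y'$. The key step is to verify that this is a sub-GDD, and the only non-trivial point is covering. If $y_1', y_2' \in Y'$ lie in distinct $\Pi_{Y'}$-groups, then their projections $y_1, y_2 \in Y$ lie in distinct $\Pi_Y$-groups, so share some $B \in \cB_Y$; the ingredient GDD over $B$ then covers $\{y_1', y_2'\}$ by a block inside the fiber over $B$, and this fiber lies in $Y'$ since $B \subseteq Y$, so the block is in $\cB_{Y'}$. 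Uniqueness is inherited from the output.

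Finally, the non-empty $\Pi_{Y'}$-parts are indexed by those master groups $X_i$ for which $X_i \cap Y$ carries positive total weight, so $Y'$ touches at most as many groups of the output as $Y$ touches of the master, namely at most $t$; and $S' \subseteq Y'$ because $\pi(S') = S \subseteq Y$. I expect the only real obstacle, modest as it is, to be the covering verification in the third paragraph --- in particular, being sure that the block produced by the ingredient GDD stays inside $Y'$ rather than escaping into fibers over $B \setminus Y$; the fact that $B \in \cB_Y$ means $B \subseteq Y$ is exactly what makes this go through.
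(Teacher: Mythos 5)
Your proposal is correct and follows the same route the paper takes: the paper treats this lemma as immediate from the observation that weighting a sub-GDD $Y$ of the master (a smaller instance of Wilson's fundamental construction) yields a sub-GDD of the output supported on the fibers over $Y$, which is exactly your lift $Y'$. Your write-up merely makes explicit the covering verification and the count of touched groups that the paper leaves as ``now clear.''
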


We need to be a little more careful with Construction~\ref{CON:fill}.  If a sub-GDD $Y$ of the input GDD has at least two (but not all) points from the same group, then filling this group with a PBD can cause $Y$ to no longer be a sub-GDD.  The remedy is to prefer subdesigns which intersect each group nicely.

\begin{lemma}
Consider a GDD $(X,\Pi,\cB)$ with a sub-GDD $Y$.  If each group $X_i \in \Pi$ is filled with a PBD $(X_i,\cA_i)$ such that $Y \cap X_i$ is a flat of $(X_i,\cA_i)$, then $Y$ is a flat of the PBD $(X,\cA \cup \cB)$, where $\cA:= \cup_i \cA_i$.  Likewise, if a point $\infty$ is added and each group $X_i$ is filled with a PBD $(X_i \cup \{\infty\},\cA_i')$ such that $Y \cap X_i$ is a flat of $(X_i,\cA_i')$, then $Y$ is a flat of the PBD $(X \cup \{\infty\},\cA' \cup \cB)$, where $\cA':=\cup_i \cA_i'$.  
\end{lemma}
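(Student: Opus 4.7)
The plan is to verify that $Y$ is a flat directly from the definition: for every pair of distinct points $y_1, y_2 \in Y$, I must exhibit a unique block of the new PBD which contains them and lies entirely in $Y$. The natural case split is on whether $y_1$ and $y_2$ lie in a common group of $\Pi$ or in distinct groups, since these two cases are covered by blocks from different sources ($\cA_i$ versus $\cB$).

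First I would handle the same-group case, say $y_1, y_2 \in X_i$. The hypothesis that $Y \cap X_i$ is a flat of $(X_i, \cA_i)$ immediately produces a block $B \in \cA_i$ with $y_1, y_2 \in B \subseteq Y \cap X_i \subseteq Y$. For uniqueness I would rule out the other possible sources: no block of $\cB$ can cover both points, because blocks in a GDD meet each group in at most one point; and no block of $\cA_j$ with $j \neq i$ can contain them, because such blocks are confined to $X_j$. Next, in the different-group case, the GDD axiom directly supplies the unique block $B \in \cB$ through $y_1, y_2$, and the sub-GDD hypothesis on $Y$ forces $B \subseteq Y$. Blocks from any $\cA_k$ are again excluded by the containment reason above.

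For the second statement, involving the added point $\infty$, the identical two-case analysis goes through. The one subtlety I would be careful about is reading the flat hypothesis correctly: $Y \cap X_i$ is a flat of the PBD $(X_i \cup \{\infty\}, \cA_i')$, which by definition means every block of $\cA_i'$ covering a pair inside $Y \cap X_i$ is itself entirely contained in $Y \cap X_i$; since $\infty \notin Y$, this automatically rules out blocks through $\infty$ as alternative covers. With that understood, both existence and uniqueness reduce to the previous case. I do not anticipate a serious obstacle---the argument is essentially bookkeeping---but the main thing to get right is precisely this interplay between the external point $\infty$ and the flat condition on $Y \cap X_i$, so that no ``through-$\infty$'' block slips in as a rival covering block.
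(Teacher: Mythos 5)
Your proof is correct. The paper actually offers no proof of this lemma (it is stated as evident and followed only by a remark), so there is nothing to compare against; your direct verification is exactly the intended argument. The case split on whether the two points of $Y$ share a group is the right decomposition, the exclusion of rival covering blocks (GDD blocks meet each group at most once; $\cA_j$ blocks are confined to $X_j \cup \{\infty\}$) correctly establishes uniqueness, and your observation that the flat hypothesis on $Y \cap X_i$ in the ambient PBD $(X_i \cup \{\infty\}, \cA_i')$ automatically excludes any covering block through $\infty$ is precisely the one point that needed care. No gaps.
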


\rk 
In our applications, we use this with sub-GDDs $Y$ intersecting the partition trivially, so that $Y \cap X_i = X_i$ or $\emptyset$ for each group.
 
In what follows, it is helpful to adopt interval notation for integers, with $[a,b]:=\{x \in \Z : a \le x \le b\}$.  Also, for sets $A,B \subset Z$, we write
$A+B:=\{a+b:a \in A, b \in B\}$, as well as $x+A=\{x+a: a \in A\}$.

\section{Proof for general block sizes}
\label{SEC:AsymptoticsConstruction}

The broad idea of the proof of Theorem~\ref{THM:Asymptotic} is to carefully tinker with the affine space AG$_{e}(q)$, $e>d$, which has the property of bounded $d$-point-generated flats but lacks the generality in its parameters.  This is similar in spirit to the approach used in \cite{BIB:PrescribedMinimumDimension}.

In a little more detail, we construct a variety of ingredient non-uniform GDDs, mostly arising from truncation of uniform GDDs.  Then, we weight the points of AG$_\delta(q)$ according to the ingredients, applying Wilson's fundamental construction to construct a large GDD based on the affine space.  We finish the proof by filling groups of this GDD.  Care must be taken to ensure that the group sizes do not exceed a universal bound.

We first present the needed non-uniform GDDs.

\begin{lemma}
\label{ING:one}
Given integers $u \ge k \ge 4$, there exists a GDD$(g^i(g-1)^{u-i},\{k-2,k-1,k\})$ for all $g \gg 0$ and all $i \in [0,u]$.  Moreover, we may assume that each group has some point incident only to blocks of size $k-1$ and $k$.
\end{lemma}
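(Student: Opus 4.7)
The plan is to obtain the required GDD by a light truncation of a uniform GDD$(g^u,k)$, which exists for $g$ sufficiently large (subject to \eqref{nec-gdd}) by Theorem~\ref{THM:TallGDD}. Label its groups $X_1,\dots,X_u$ and, after relabeling, designate $X_{i+1},\dots,X_u$ as the $u-i$ groups to be shrunk. From each such group I would delete a single point $x_j\in X_j$; set $D=\{x_{i+1},\dots,x_u\}$. Any master block $B$ then becomes a truncated block of size $k-|B\cap D|$, so to land in $\{k-2,k-1,k\}$ I need $|B\cap D|\le 2$ for every block $B$. The moreover clause translates to a further demand: in each $X_j$ there should exist a point $p_j\in X_j\setminus D$ that lies on no block $B$ with $|B\cap D|=2$.

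I would produce such a $D$ by the probabilistic method, choosing each $x_j$ independently and uniformly in $X_j$. For a fixed master block $B$ touching $r\le k$ deletion groups,
\[
\Pr[\,|B\cap D|\ge 3\,]\le \binom{r}{3}g^{-3}\le \binom{k}{3}g^{-3},
\]
so summing over the $\binom{u}{2}g^{2}/\binom{k}{2}$ blocks of the master, the expected number of blocks violating the block-size condition is $O(u^{2}/g)$. Similarly, for a fixed candidate point $p\in X_j$ on $g(u-1)/(k-1)$ blocks, and each block $B\ni p$,
\[
\Pr[\,|(B\setminus\{p\})\cap D|\ge 2\,]\le \binom{k-1}{2}g^{-2},
\]
so the expected number of blocks through $p$ of size $k-2$ is $O(u/g)$ and the expected number of ``bad'' points in $X_j$ is $O(u)$. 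Markov's inequality and a union bound over the $u$ groups then show that for $g\gg u,k$ both events (i) $|B\cap D|\le 2$ for every $B$ and (ii) each $X_j$ contains a non-bad point $p_j$ occur simultaneously with positive probability. Deleting such a $D$ yields the asserted GDD, and the points $p_j$ witness the moreover clause.

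The main obstacle is coordinating the two probabilistic conditions (i) and (ii); fortunately each fails with probability $O(u^{2}/g)\to 0$ as $g\to\infty$, so the union bound closes cleanly for fixed $u,k$. A secondary technicality is the divisibility needed for the initial GDD$(g^u,k)$, which is absorbed into the ``$g\gg 0$'' hypothesis by reading it as ``$g$ sufficiently large within the appropriate residue class dictated by \eqref{nec-gdd}''; this is sufficient because the target block-size set $\{k-2,k-1,k\}$ has $\alpha=1$ and so imposes no extra divisibility restriction on the output.
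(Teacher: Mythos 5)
There is a genuine gap, and it is exactly the point your final paragraph tries to wave away. Starting from a GDD$(g^u,k)$ forces $g$ to satisfy the divisibility conditions \eqref{nec-gdd}, which for many pairs $(u,k)$ restrict $g$ to a proper union of residue classes: for instance, with $u=5$, $k=4$ the condition $k-1 \mid g(u-1)$ becomes $3 \mid g$, so your construction can only reach $g \equiv 0 \pmod 3$. But the lemma asserts the conclusion for \emph{every} sufficiently large $g$, and this full strength is used in Lemma~\ref{ING:two}: there the groups of size $m=(g-1)l$ must realize every large $m$ divisible by $l$, i.e.\ $g=m/l+1$ runs over all large integers. Reading ``$g \gg 0$'' as ``sufficiently large within the admissible residue class'' therefore proves a strictly weaker statement that does not support the downstream application; the fact that $\alpha(\{k-2,k-1,k\})=1$ is about the output design and does nothing to relax the constraint on the input GDD.

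The paper's proof avoids this by starting one level up: choose $h \in [g, g+k(k-1)]$ with $k(k-1) \mid h$ (such $h$ always satisfies \eqref{nec-gdd}, for any $u$), take a GDD$(h^u,k)$, and truncate $\Delta := h-g$ points from $i$ groups and $\Delta+1$ points from the remaining $u-i$ groups. Since $\Delta$ can be as large as $k(k-1)$, one must now delete up to $k(k-1)+1$ points \emph{per group} while still ensuring no block loses more than two points and each group retains a point on no twice-hit block; the paper does this with an iterative greedy argument that ``protects'' the $O(u^2k^5)$ points already lying on twice-hit blocks. Your probabilistic calculation is sound for the one-point-per-group deletion you set up, and could in principle be adapted to the multi-point deletion (choosing random $\Delta$- or $(\Delta+1)$-subsets of each group), but as written the argument does not establish the lemma for all large $g$, which is the heart of the statement.
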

\begin{proof}
Given $g \gg 0$, choose $h \in [g,g+k(k-1)]$ with $k(k-1) \mid h$.  We may assume, by Theorem~\ref{THM:TallGDD}, that there exists GDD$(h^u,k)$.

We desire to truncate $\Delta:=h-g$ points from $i$ groups and $\Delta+1$ points from the remaining $u-i$ groups in such a way that at most two points get removed from each block, and also such that some point in each group is incident to only blocks with at most one point removed.
This is straightforward via an iterative random process, since $h$ is large relative to $u$ and $k$.  Selecting up to $\Delta+1$ points from each of two groups renders at most $(k-2) (\Delta+1)^2 < k^5$ points in blocks reduced by two points.  We `protect' these points from truncation in later steps, and choose another group for truncation, and enlarge the protected set.  There are $O(u^2 k^5)$ protected points throughout, and so for large $g$ the desired truncation is possible.  Moreover, we may assume each group has unprotected points at the end, and these points are not on a block with any removed pair.
\end{proof}

\begin{lemma}
\label{ING:two}
Given integers $u \ge l \ge 2$, there exists a GDD$(m^i (m+l)^{u-i-1} (m+x)^1,\{l,l+1\})$ for all $m \gg 0$ with $l \mid m$, all $i \in [0,u-1]$ and all $x \in [0,l]$.
\end{lemma}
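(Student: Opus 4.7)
The plan is to mimic the strategy of Lemma~\ref{ING:one}: start from a large uniform GDD produced by Theorem~\ref{THM:TallGDD} and massage it into the target type using truncation together with Wilson's fundamental construction (Construction~\ref{CON:WFC}). The new obstacle is that the block-size set $K = \{l, l+1\}$ permits at most one point of any block to be truncated, so the simultaneous multi-group truncation used in Lemma~\ref{ING:one} is unavailable.

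Concretely, pick a large $h$ divisible by $l(l+1)$, so that Theorem~\ref{THM:TallGDD} yields a master GDD$(h^u, l+1)$; the conditions \eqref{nec-gdd} are automatic. Setting $q = \lfloor m/h \rfloor$, I would assign weights $q$ or $q+1$ to the points of the master so that each master group's weight-sum equals the required super-group size $m$, $m+l$, or $m+x$. Such an integer-valued weighting is possible because $l$ divides both $m$ and $h$. Applying Construction~\ref{CON:WFC} then reduces the lemma to producing ingredient GDDs of type $q^a (q+1)^b$ with $a+b = l+1$ and block sizes $\{l,l+1\}$, one for each pattern that arises on a master block.

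The main obstacle is constructing these ingredient GDDs for intermediate values $2 \le b \le l-1$. The boundary cases $b \in \{0, 1, l, l+1\}$ follow from a single-group truncation of $\mathrm{TD}(l+1, q)$ or $\mathrm{TD}(l+1, q+1)$, which keeps block sizes safely in $\{l, l+1\}$. For the intermediate cases, direct simultaneous truncation of several groups of a TD would drop block sizes below $l$, so a different approach is required. My plan here is to adjoin the $b$ new points one at a time to distinct groups of $\mathrm{TD}(l+1, q)$, at each step covering the new point's $lq$ missing pairs by a carefully chosen parallel class of size-$(l+1)$ blocks through the new point, taking the transversals used to be non-standard so that pair coverage in the existing TD is not duplicated. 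An elementary counting argument shows that enough safe parallel classes exist once $q$ is large. Once the ingredients are in place, Construction~\ref{CON:WFC} finishes the proof.
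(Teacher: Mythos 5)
Your overall architecture (uniform master from Theorem~\ref{THM:TallGDD}, two consecutive weights, Wilson's fundamental construction) is reasonable, but the step on which everything hinges --- producing the ingredient GDDs of type $q^a(q+1)^b$, $a+b=l+1$, with block sizes $\{l,l+1\}$ --- does not go through as described. The ``adjoin a new point $\infty$ and cover its $lq$ missing pairs by a parallel class of size-$(l+1)$ blocks through $\infty$'' step is broken: each such new block contains, besides $\infty$, at least $l-1\ge 2$ points lying in distinct old groups, and in a transversal design \emph{every} pair of points from distinct groups is already covered by exactly one existing block. Choosing the transversals to be ``non-standard'' (i.e.\ not blocks of the TD) does not help, because the obstruction is duplicated \emph{pair} coverage, not duplicated blocks: all $\binom{l}{2}$ internal pairs of any transversal are covered somewhere in the TD. So any honest version of this step must also delete and restructure existing blocks (one is essentially being asked to build an incomplete transversal design), and that is a substantial construction you have not supplied. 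A secondary slip: $b=1$ (type $q^l(q+1)^1$) is not a boundary case obtainable by single-group truncation --- to reach it from a TD$(l+1,q+1)$ you would have to remove one point from each of $l$ groups, and any two such points from different groups share a block, which would then drop to size $l-1$. Finally, the weighting itself needs a small patch (with $q=\lfloor m/h\rfloor$ the group sum $m+l$ may exceed $h(q+1)$ when $m\bmod h > h-l$), though that is easily repaired.

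For contrast, the paper sidesteps the non-uniform-ingredient problem entirely by putting the non-uniformity into the \emph{master}: it takes the GDD$(g^i(g-1)^{u-i},\{k-2,k-1,k\})$ of Lemma~\ref{ING:one}, gives every point the constant weight $l$, and uses only \emph{uniform} ingredients GDD$(l^{k-2},\{l,l+1\})$, GDD$(l^{k-1},\{l+1\})$ and GDD$(l^{k},\{l+1\})$ supplied by Theorem~\ref{THM:WideGDD} (the first by deleting a group). This yields the type $m^i(m+l)^{u-i}$ at once, and the odd group size $m+x$ is then obtained by truncating up to $l$ points from the inflated image of the special point of Lemma~\ref{ING:one} that lies only on blocks of size $k-1$ and $k$, so that the affected blocks all have size $l+1$ and can each afford to lose one point. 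If you want to salvage your route, you would need either a genuine existence proof for GDD$(q^a(q+1)^b,\{l,l+1\})$ for all intermediate $b$, or a way to confine the weight pattern on every master block to the tractable cases --- at which point you are re-proving something very close to Lemma~\ref{ING:one}.
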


\begin{proof}
Choose $k \ge l+2$ large enough so that there exist, by Theorem~\ref{THM:WideGDD}, both GDD$(l^{k-1},\{l+1\})$ and GDD$(l^k,\{l+1\})$.  From the former GDD, we may truncate all points of one group, resulting in a GDD$(l^{k-2},\{l,l+1\})$.

Give weight $l$ to all points of the GDD from Lemma~\ref{ING:one}.  After Construction~\ref{CON:WFC}, the result is a GDD$((gl)^i((g-1)l)^{u-i},\{l-1,l\})$.  This proves the lemma for $x=l$.  

The additional property of the GDD in Lemma~\ref{ING:one} ensures that some weighted point is incident only with blocks of size $l+1$.  So, after weighting, if we truncate from the resulting set of $l$ points, leaving $x$ behind, all block sizes remain in $\{l,l+1\}$.  This allows us to reduce one group size from $m+l$ to $m+x$.
\end{proof}

We now return to our main goal for this section.

\begin{proof}[Proof of Theorem~\ref{THM:Asymptotic}]
Let $\alpha:=\alpha(K)$, $\beta:=\beta(K)$, and put $\gamma:=\beta/\alpha$.  Let $R$ denote the set of integers $r$ such that there exist PBD$(\alpha r + 1,K)$.  By Theorem~\ref{THM:AsymptoticPBD} and a calculation, there is an integer $r_0$ such that $r \in R$ for all $r \ge r_0$ satisfying $\gamma \mid r(\alpha r +1)$.

Fix some $b \equiv 0 \pmod{\beta}$ large enough so that there exist, by Theorem~\ref{THM:WideGDD}, both GDD$(\alpha^b,K)$ and GDD$(\alpha^{b+1},K)$.  
Put $q \equiv 1 \pmod{b}$ a prime power and consider the affine space AG$_e(q)$, $e>d$.

Given the above $b,q$, there exists, by Lemma~\ref{ING:two}, a GDD$(m^i (m+b)^{q-i-1} (m+x)^1,\{b,b+1\})$ for all $m \gg 0$ with $b \mid m$, all $i \in [0,q-1]$ and all $x \in [0,b]$.  We may further assume that $m \ge r_0$.
The result of applying Construction~\ref{CON:WFC} to AG$_e(q)$ with these weights and ingredients is a  
\begin{equation}
\label{eqGDD}
\text{GDD}(m^j (m+b)^{q^e-j-1} (m+x)^1,\{b,b+1\}),
\end{equation}
where $j$ takes on any value in $[0,q^e-1]$ and $x$ takes on any value in $[0,b]$.  From the underlying affine structure, every set of $d$ points in (\ref{eqGDD}) is contained in a sub-GDD intersecting at most $q^d$ groups, and we can assume the intersection with the group partition is trivial.

Next, apply Construction~\ref{CON:WFC} again, this time to (\ref{eqGDD}) with constant weight $\alpha$.  The result is a
GDD$((\alpha m)^j (\alpha(m+b))^{q^e-j-1} (\alpha(m+x))^1,K)$, and subdesigns in (\ref{eqGDD}) have been inflated by $\alpha$.

To finish off, add a point apply Construction~\ref{CON:fill}, noting that $m,m+b \in R$ and, when admissible, we also have $m+x \in R$.
The result is a PBD$(\alpha (m q^e + b(q^e-j-1)+x)+1,K)$.

Observe that every sufficiently large integer in $R$ is expressible as $m q^e + b(q^e-j-1)+x$ for some $x \in [0,b]$ with $\gamma \mid x(\alpha x+1)$, some $j \in [0,q^e-1]$, some $m \in [r_0,qr_0]$ with $b \mid m$, and some $e > d$.  In this case we have that every set of $d$ points in our PBD is contained in a flat of size at most $f(d,K)=\alpha q^d(qr_0+b)+1$.
\end{proof}

\section{The case $K=\{3,4,5\}$}

Recall that our second main result, Theorem~\ref{THM:345}, asserts an explicit upper bound of 94 (often 63) on three-point-generated flats in some PBD$(v,\{3,4,5\})$.
We divide the proof into two separate cases (these define Subsections 4.1 and 4.2 to follow) depending on ranges of values of $v$.   Both arise from applying Construction~\ref{CON:WFC} to PG$_d(4)$, which recall is a PBD$(v,\{5\})$ for $v=[d]_4=\sum_{i=0}^d 4^i$.  In PG$_d(4)$, any three non-collinear points are contained in a projective plane on 21 points.  

We choose $d \ge 3$ so that $[d]_4$ is nearby $v$.  First, the `truncation case', exclusively uses weights 0 and 1 to cover the range $3[d-1]_4 \le v \le [d]_4$.  In this case, any three points remain in a flat on at most 21 points.  Next, the `inflation case' uses weights 3 and 4 to treat the range $[d]_4 \le v  \le 3[d]_4$.  This is the case where our larger flat sizes occur.  The dimension $d$ gets incremented and intervals overlap.

\subsection{Truncation}

Here, we show that PG$_d(4)$ can be truncated to a PBD$(v,\{3,4,5\})$ for any $v \in [3[d-1]_4, [d]_4]$.  
Alternatively, we would like to keep $v$ points of PG$_d(4)$ such that no line contains exactly two points.  (Lines with 0 or 1 point get discarded.) 
We call a truncation of PG$_d(4)$ \emph{legal} if it avoids leaving any line of size two.

Let us review the structure of PG$_d(q)$ in a little more detail.  Every PG$_d(q)$ contains several hyperplanes (copies of PG$_{d-1}(q)$) as maximal proper flats.  Truncation of a hyperplane results in AG$_d(q)$.  Every copy of PG$_{d-2}(q)$ in PG$_d(q)$ is the intersection of $q+1$ hyperplanes.  Truncating the intersection results in $q+1$ affine spaces AG$_{d-1}(q)$.  Let us call these \emph{pages}, with respect to the choice of codimension-two space PG$_{d-2}(q)$, which we call the \emph{spine}.

We are interested in $q=4$.  (Incidentally, this case also admits `Baer subspaces', but these are not needed in what follows.)  
Take a fixed spine in PG$_d(4)$ and consider its five pages.  With respect to this partition, there are three classes of lines (See also Figure~\ref{PG-structure}):
\begin{itemize}
\item [(A)] lines which have four points in some  page and one point in the spine;
\item [(B)] lines contained entirely in the spine; and
\item [(C)] lines which touch each page once.
\end{itemize}

\begin{figure}
\caption{spine/page structure and line classes in PG$_d(4)$}
\hspace{4cm}
\begin{tikzpicture}[darkstyle/.style={circle,draw,fill=gray!40,minimum size=20}]

\def\colgreen{white!10!green};
\def\colred{white!25!red};
\def\colblue{white!25!blue};
\def\textgreen{green};
\def\textred{red};
\def\textblue{blue};

\foreach \angle in {0,72,144,216,288}
{
\draw[rounded corners=4pt,rotate around={\angle-18:(0,0)}] (0.6,-0.42) rectangle (2.2,0.42);
\filldraw[rotate around={\angle-18:(0,0)},color=\colgreen] (1.0,0.2) circle [radius=.1];
}

\draw (0,0) circle [radius=.5];
\draw[color=\colgreen] (0,0) circle [radius=1.0];

\draw[color=\colblue] plot coordinates{(0.20,0.25) (0.20+1,0.25+1.376)};

\foreach \run in {0,0.25,0.5,0.75,1.0}
{
\filldraw[color=\colblue] (0.20+\run,0.25+1.376*\run) circle [radius=.1];
}

\draw[color=\colred] plot coordinates{(-0.3,-0.1) (0.3,-0.1)};

\foreach \run in {0,0.15,0.3,0.45,0.60}
{
\filldraw[color=\colred] (\run-0.30,-0.1) circle [radius=0.05];
}

\def\unit{0.5};
\def\base{1.6};
\def\basex{2.4};

\node[anchor=north west,text width=6cm] (lineA) at (\basex,\base) {Line Types:};
\node[anchor=north west,text width=6cm,color=\textblue] (lineA) at (\basex,\base-\unit) {Type A};
\node[anchor=north west,text width=6cm,color=\textred] (lineA) at (\basex,\base-2*\unit) {Type B};
\node[anchor=north west,text width=6cm,color=\textgreen] (lineA) at (\basex,\base-3*\unit) {Type C};


\end{tikzpicture}
\label{PG-structure}
\end{figure}
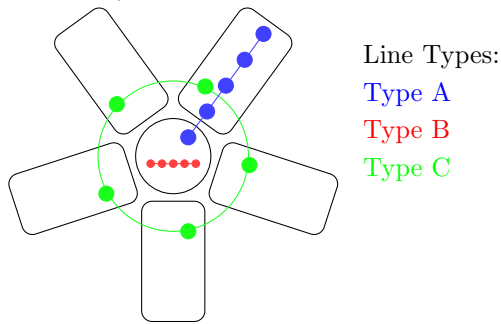

A key observation is that two legally truncated hyperplanes can be glued along their spine.  

\begin{lemma}
\label{LEM:Gluing}
Suppose PG$_{d-1}(4)$ can be legally truncated by either $n_1$ or $n_2$ points, in both cases with some hyperplane left untouched.  Then PG$_d(4)$ can be truncated by $n_1+n_2$ points.  Moreover, suppose legal truncations of PG$_{d-1}(4)$ as above each remove precisely some copy of PG$_i(4)$ from one hyperplane.  Then PG$_d(4)$ can be truncated by $n_1+n_2-[i]_4$ points.
\end{lemma}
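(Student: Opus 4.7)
The plan is to realize the truncation of PG$_d(4)$ by gluing two truncated copies of PG$_{d-1}(4)$ along a common codimension-two flat. Fix a spine $S \cong$ PG$_{d-2}(4)$ in PG$_d(4)$ and choose two of the five hyperplanes through $S$, calling them $H_1$ and $H_2$; note that $H_1 \cap H_2 = S$ and the three remaining hyperplanes through $S$ stay entirely out of the construction. For the first assertion, I would transfer the two legal truncations of PG$_{d-1}(4)$ onto $H_1$ and $H_2$ via collineations $\phi_j$ chosen so that each hypothesized untouched hyperplane is mapped onto $S$; such $\phi_j$ exist because the collineation group of PG$_{d-1}(4)$ is transitive on its hyperplanes. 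The removed points then lie in $H_j \setminus S$, and since $H_1 \setminus S$ and $H_2 \setminus S$ are disjoint, the resulting truncation of PG$_d(4)$ deletes exactly $n_1 + n_2$ points.

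For the moreover statement, I would instead choose each $\phi_j$ to send the hyperplane that contains the removed PG$_i(4)$ onto $S$. The stabilizer of $S$ in the collineation group of $H_j$ induces on $S$ the full collineation group of $S \cong$ PG$_{d-2}(4)$, which acts transitively on its $i$-dimensional subspaces; so I may further arrange for the two images of PG$_i(4)$ to coincide as a single $[i]_4$-point subset of $S$. The two truncations now agree on this shared PG$_i(4) \subset S$, and the total number of removed points becomes $(n_1 - [i]_4) + (n_2 - [i]_4) + [i]_4 = n_1 + n_2 - [i]_4$.

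The bulk of the argument is verifying that the resulting set is legally truncated, which I would do by casing on the three line types A, B, C in the spine/page decomposition of PG$_d(4)$ depicted in Figure~\ref{PG-structure}. Type A lines lying inside $H_1$ or $H_2$ inherit legality directly from the corresponding legal truncation of PG$_{d-1}(4)$; type A lines lying in one of the three untouched hyperplanes through $S$ lose at most the single spine point and so keep at least four of their five points. Type C lines meet $H_1$ and $H_2$ in one point each, hence lose at most two points altogether. The delicate case is type B, lines contained in $S$: in the first assertion these are entirely untouched, while in the moreover assertion a line $L \subset S$ loses exactly the number of points it shares with the deleted PG$_i(4)$; since two projective subspaces intersect in a subspace, this number is $0$, $1$, or $5$, leaving $5$, $4$, or $0$ points remaining---never $2$. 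I expect the type B verification together with the collineation-based alignment of the two PG$_i(4)$'s in $S$ to be the main technical subtleties.
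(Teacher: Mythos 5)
Your proof is correct and follows essentially the same route as the paper: identify the untouched hyperplane (or the one containing the removed PG$_i(4)$) with the spine, place the two truncations in two pages with the PG$_i(4)$'s made to coincide in the spine, and verify legality by the type A/B/C line analysis. The paper's version is terser---it leaves the collineation/transitivity facts implicit---but the substance, including the key observation that a type B line meets the deleted PG$_i(4)$ in $0$, $1$, or $5$ points, is identical.
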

\begin{proof}
Consider the spine/page structure of PG$_d(4)$.  In the first case, truncate $n_1$ and $n_2$ points from two pages, leaving the spine and all other pages untouched.  In the second case, truncate similarly, except with a common PG$_i(4)$ truncated from the spine.

By hypothesis, no type (A) line reduces to size 2.  Since the spine has lost a flat, type (B) lines are truncated to either 0, 4, or 5 points.
Finally, since only two pages experience truncation, lines of type (C) have lost at most 2 points.
\end{proof}

\rk
Note that the output of the above construction leaves either an untouched or PG$_e(4)$-truncated hyperplane (actually three such hyperplanes). 

We now analyze the possibilities arising from this gluing operation. 

For $i \in [0,d-1]$, let $T(d,i)$ be the set of numbers of points we can legally truncate from PG$_d(4)$ while leaving a hyperplane with precisely some PG$_i(4)$ truncated.  Let $T(d) = \cup_{i=0}^{d-1} T(d,i)$.  Our goal is to show $T(d)$ contains the first several positive integers.

Put $S(d,i):=T(d,i)-[i]_4$ for $i \ge 0$.  For example, $S(1,0)=\{0,1,3,4\}$.  Note that we may legally truncate an entire AG$_d(4)$ outside of a hyperplane, so in particular $4^d \in S(d,i)$ for each $i$.
By Lemma~\ref{LEM:Gluing} and the remark following it, we also have that
\begin{equation}
\label{EQN:S1}
S(d,i) \supseteq S(d-1,i) + S(d-1,i)
\end{equation}
for $0 \le i \le d-2$.

\begin{lemma}
\label{LEM:PGinS}
For all positive integers $d$, we have $[d-1]_4 \in S(d,0)$.
\end{lemma}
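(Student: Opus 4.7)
The plan is to proceed by induction on $d$, driven by the recursion $S(d,0) \supseteq S(d-1,0) + S(d-1,0)$ from (\ref{EQN:S1}) together with the elementary identity
\[
[d-1]_4 \;=\; [d-2]_4 + 4^{d-1},
\]
which simply peels the top term off the geometric sum.

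The base case $d=1$ is immediate from the explicitly tabulated value $S(1,0)=\{0,1,3,4\}$, since $[0]_4 = 1$ lies in this set.

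For the inductive step at $d \ge 2$, the inductive hypothesis supplies the first summand $[d-2]_4 \in S(d-1,0)$. The second summand $4^{d-1}$ also lies in $S(d-1,0)$, by the general observation already established in the text: $4^n \in S(n,i)$ for every admissible $i$, obtained by truncating the entire complementary AG$_n(4)$ together with one extra point inside the untouched hyperplane that serves as the PG$_0$ witness. Applied at $n = d-1$ and $i = 0$, this gives $4^{d-1} \in S(d-1,0)$. Since $d \ge 2$ certifies $i = 0 \le d-2$, the recursion (\ref{EQN:S1}) is applicable, and we conclude
\[
[d-1]_4 \;=\; [d-2]_4 + 4^{d-1} \;\in\; S(d-1,0) + S(d-1,0) \;\subseteq\; S(d,0).
\]

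I do not expect a serious obstacle: the claim is essentially a one-line consequence of the previously established gluing recursion once one spots the geometric-series decomposition. The only care needed is to confirm that the truncation witnessing $4^{d-1}$ can be taken with $i=0$ (so that both summands live in the \emph{same} set $S(d-1,0)$ and can be combined via (\ref{EQN:S1})); this is exactly the content of the remark preceding (\ref{EQN:S1}).
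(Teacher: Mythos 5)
Your proof is correct and is essentially the paper's own argument: both run an induction whose step peels the leading power of $4$ off the geometric sum $[d-1]_4$, feeds the two summands $[d-2]_4$ (inductive hypothesis) and $4^{d-1}$ (the noted fact that $4^n \in S(n,i)$) into the gluing recursion (\ref{EQN:S1}), and concludes. The only difference is cosmetic indexing (you step from $d-1$ to $d$; the paper steps from $d$ to $d+1$).
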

\begin{proof}
The claim is true for $d=1$, since $1 \in S(1,0)$. Assume the claim is true for $d \ge 1$.  Since $4^d \in S(d,0)$, equation (\ref{EQN:S1}) gives
$4^d+[d-1]_4 = [d]_4 \in S(d+1,0)$.  The result follows by induction.
\end{proof}

We now consider $S(d,d-1)$ as a special case.  Recursively adding either four empty pages or one empty and three full pages, we have
\begin{equation}
\label{EQN:S2}
S(d,d-1) \supseteq \{0,3 \times 4^{d-1}\} + S(d-1,d-2).
\end{equation}

\begin{lemma}
\label{LEM:4times}
For all positive integers $d$, we have $[0,4^{d+1}] \subseteq 4 * S(d,d-1) \subseteq S(d+2,d-1)$.
\end{lemma}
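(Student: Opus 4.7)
The strategy is to prove the two inclusions separately. The right-hand containment will follow from two applications of (\ref{EQN:S1}), and the left-hand covering from iterating (\ref{EQN:S2}) combined with a digit-representation argument.

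For $4 * S(d, d-1) \subseteq S(d+2, d-1)$, I will note that (\ref{EQN:S1}) with $i = d-1$ is valid both for $S(d+1, d-1) \supseteq S(d, d-1) + S(d, d-1)$ (since $d - 1 \le d - 1$) and for $S(d+2, d-1) \supseteq S(d+1, d-1) + S(d+1, d-1)$ (since $d - 1 \le d$). Chaining these yields the desired inclusion.

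For $[0, 4^{d+1}] \subseteq 4 * S(d, d-1)$, I will iterate (\ref{EQN:S2}) from $d$ down to $1$ to obtain
\[
S(d, d-1) \supseteq \{0, 12\} + \{0, 3 \cdot 4^2\} + \cdots + \{0, 3 \cdot 4^{d-1}\} + S(1, 0),
\]
with $S(1, 0) = \{0, 1, 3, 4\}$. Taking $4$-fold Minkowski sums on both sides (distributing over $+$), and using $4 * \{0, 3 \cdot 4^i\} = \{3 \cdot 4^i \cdot j : 0 \le j \le 4\}$ together with the quick check $4 * \{0, 1, 3, 4\} \supseteq [0, 16]$ (which follows from $\{0, 1, 3, 4\} = \{0, 1\} + \{0, 3\}$ after two doublings), I will arrive at
\[
4 * S(d, d-1) \supseteq \left\{ 12 M + s : M = \sum_{i=0}^{d-2} c_i 4^i,\ c_i \in \{0, 1, 2, 3, 4\},\ s \in [0, 16] \right\}.
\]

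It then remains to verify by induction on $d$ that every integer $M \in [0, (4^d - 4)/3]$ has such an extended base-$4$ representation, using the greedy rule $c_{d-2} = \min(\lfloor M / 4^{d-2} \rfloor, 4)$ and checking that the remainder lies in $[0, (4^{d-1} - 4)/3]$. Granting this, $4 * S(d, d-1)$ contains $12 M + s$ for every $M \in [0, (4^d - 4)/3]$ and $s \in [0, 16]$; since consecutive blocks $[12 M, 12 M + 16]$ overlap (as $12 \le 16$), their union is exactly $[0, 12(4^d - 4)/3 + 16] = [0, 4^{d+1}]$. The main obstacle will be the digit-representation claim, which is elementary by induction but requires attention at the top-digit boundary to handle inputs near the upper endpoint $(4^d - 4)/3$.
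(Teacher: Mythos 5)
Your proposal is correct and takes essentially the same route as the paper: the right-hand containment via the same two applications of (\ref{EQN:S1}), and the left-hand one from the base case $4 * S(1,0)=[0,16]$ together with repeated use of (\ref{EQN:S2}). The paper merely packages your unrolled digit-representation argument as a one-step induction, $4 * S(d+1,d) \supseteq 4*\{0,3\times 4^d\}+[0,4^{d+1}]=[0,4^{d+2}]$, which sidesteps the separate representability claim you flag as the main obstacle.
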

\begin{proof}
The first containment is an easy induction.  We have $4 *S(1,0)=[0,16]$ and, for $d \ge 1$, $$4*S(d+1,d) \supseteq 4*\{0,3 \times 4^d\}+[0,4^{d+1}] =[0,4^{d+2}],$$
where  (\ref{EQN:S2}) has  been used.  The second containment is just two applications of (\ref{EQN:S1}).
\end{proof}

There are other possible recursive truncations, but the above are enough for our purposes.

\begin{proof}[Proof of Theorem~\ref{THM:345}, truncation case]
Let $d \ge 3$.  We prove here that PG$_d(4)$ admits a legal truncation to $v$ points when $3[d-1]_4 \le v \le [d]_4$.  It suffices to prove $[1,[d-1]_4+1] \subseteq T(d)$; this is done in two stages.

{\sc Low values.} $[1,\frac{1}{2} [d-1]_4] \subseteq T(d)$.\\
We show by induction that $A:=[0,\frac{1}{2} [d-1]_4] \subseteq S(d,0)$.  First, this is true for $d=1$ and $d=2$ since $0 \in S(1,0)$ and $0,1,2 \in S(2,0)$.  Assume the statement holds for $d$.  By (\ref{EQN:S1}) and Lemma~\ref{LEM:PGinS}, each of the sets $A+A, [d-1]_4+A, 2 \times 4^{d-1} + A$ is contained in $S(d+1,0)$.  It is easy to check that these intervals cover $[0,\frac{1}{2}[d]_4]$.

{\sc High values.} $[[d-3]_4,[d-1]_4+1] \subseteq T(d)$.\\
By Lemma~\ref{LEM:4times}, we have $[0,4^{d-1}] \subset S(d,d-3)$.  We also have $4^{d-1}+[1,5] \subset S(d,d-3)$, since each summand is also in $S(d-1,d-3)$.  Repeated use of (\ref{EQN:S1}) gives
\begin{equation}
\label{EQN:series}
4^{d-2}+4^{d-3}+\cdots+4^{i+2} + [0,4^{d-1}+5] \subseteq S(d,i)
\end{equation}
for $0 \le i < d-3$.  Let us denote the interval on the left of (\ref{EQN:series}) by $B_i$.  Since $[i]_4+B_i \subseteq T(d,i)$, it follows that $T(d,i)$ have overlapping intervals covering between $[d-3]_4$ and 
$$[0]_4+\max B_0 = 4^{d-1}+4^{d-2}+\cdots +4^2+6 = [d-1]_4+1.$$
The low and high values overlap, since $[d-3]_4 < \frac{1}{2}[d-1]_4$.
\end{proof}

\subsection{Inflation}

We require some specific GDDs which are easy to construct from small planes. 
See \cite{BIB:NiezenThesis,BIB:Teirlinck} for more details.
These play a similar role as the GDDs in Theorem~\ref{ING:one} for general $K$.   

\begin{lemma}
\label{LEM:smallGDD}
There exist GDD$(1^i 3^{5-i},\{3,4,5\})$ and GDD$(4^i 5^{5-i},\{3,4,5\})$ for all $i \in [0,5]$.  There also exist GDD$(1^i 3^{4-i},\{3,4\})$ for $i=0,1,4$.
\end{lemma}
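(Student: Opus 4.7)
My plan is to build each small GDD from a natural seed design by truncation, falling back on direct hand constructions for the few cases that truncation does not handle.

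For GDD$(1^i 3^{5-i}, \{3,4,5\})$, I would first handle the case $i = 0$ by removing a point $\infty$ from $\mathrm{AG}_2(4)$: the five lines through $\infty$ (minus $\infty$) serve as the groups of size 3 and the remaining 15 lines of size 4 are the blocks. The case $i = 1$ then follows by further truncating two of three points in one group; those two points share only the group line of $\mathrm{AG}_2(4)$, so no other block loses more than one point. The case $i = 5$ is just a single block of size 5 on five singletons. For $i = 3$ and $i = 4$, I would exhibit direct small constructions by hand---for instance, GDD$(1^4 3^1, \{3\})$ on seven points uses the six triples $\{a,b,1\}, \{a,c,2\}, \{a,d,3\}, \{b,c,3\}, \{b,d,2\}, \{c,d,1\}$, and GDD$(1^3 3^2, \{3\})$ on nine points arises by joining a $3 \times 3$ Latin square with the block $\{a, b, c\}$. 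The case $i = 2$ on 11 points is the delicate one, requiring a hand construction with mixed block sizes; I would either verify such a construction directly or invoke \cite{BIB:NiezenThesis}.

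For GDD$(4^i 5^{5-i}, \{3,4,5\})$, I would use $\mathrm{TD}(5, 5)$ and $\mathrm{TD}(5, 4)$ for the extreme cases $i = 0$ and $i = 5$, both of which exist since 5 and 4 are prime powers. For $i \in [1, 4]$, the plan is to truncate $\mathrm{TD}(5, 5)$ by removing one point from each of $i$ chosen groups; since blocks are transversals, a block loses exactly the number of its intersections with the removed points. A greedy selection of the removed points so that no three are collinear succeeds: at step $j \leq 4$ at most $\binom{j - 1}{2} \leq 3$ points of the current group are forbidden by previously chosen pairs, leaving at least two valid choices. Every block then retains size at least 3.

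For GDD$(1^i 3^{4-i}, \{3,4\})$ with $i \in \{0, 1, 4\}$, the plan is to use $\mathrm{TD}(4, 3)$ (from two MOLS of order 3) for $i = 0$; to truncate two of three points in one group of $\mathrm{TD}(4, 3)$ for $i = 1$, with block sizes remaining in $\{3, 4\}$ since the removed pair shares no block; and to use the single size-4 block on four singletons for $i = 4$.

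The main obstacle will be the intermediate case $i = 2$ of the first family, where a na\"ive truncation of the $\mathrm{AG}_2(4)$-based design creates invalid size-2 blocks and one must verify an explicit small construction or invoke \cite{BIB:NiezenThesis, BIB:Teirlinck}.
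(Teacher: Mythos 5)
Your route is essentially the one the paper intends but never writes down: the paper offers no proof of this lemma, saying only that these GDDs ``are easy to construct from small planes'' and pointing to \cite{BIB:NiezenThesis,BIB:Teirlinck}, so your explicit constructions from AG$_2(4)$, the transversal designs TD$(5,5)$, TD$(5,4)$, TD$(4,3)$, and small Latin squares are strictly more detailed than what appears in the paper. The steps you do give all check out: deleting a point of AG$_2(4)$ yields a GDD$(3^5,\{4\})$; two points of a common group lie on no common block, so the $i=1$ truncations in the first and third families are legal and leave block sizes in $\{3,4\}$; your greedy deletion of one point per group of TD$(5,5)$ with no three deleted points on a common block works exactly as claimed (at step $j$ at most $\binom{j-1}{2}\le 3$ of the five candidates are forbidden), so every block retains at least three points; and your hand examples of types $1^4 3^1$ and $1^3 3^2$ verify directly.

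The one case you leave open, GDD$(1^2 3^3,\{3,4,5\})$ on $11$ points, is indeed the delicate one --- you correctly diagnose that truncating two points from each of two groups of the AG$_2(4)$ design leaves four blocks of size two --- and deferring it to \cite{BIB:NiezenThesis} is no worse than what the paper does for the entire lemma. For completeness it can be finished as follows: form a PBD$(11,\{3,5\})$ from one block $B=\{x_1,\dots,x_5\}$ together with the triples $\{x_i\}\cup e$, $e\in F_i$, where $F_1,\dots,F_5$ is a one-factorization of $K_6$ on the other six points; choose pairwise disjoint edges $e_1\in F_1$, $e_2\in F_2$, $e_3\in F_3$ forming a perfect matching (a short check with the standard starter factorization produces one), and declare $\{x_i\}\cup e_i$ for $i=1,2,3$ together with $\{x_4\}$ and $\{x_5\}$ to be the groups. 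Since distinct factors share no edge, every remaining block meets each group in at most one point, giving the required GDD$(1^2 3^3,\{3,5\})$.
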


Now, we simply inflate PG$_d(4)$ and replace with these ingredients.

\begin{proof}[Proof of Theorem~\ref{THM:345}, inflation case]
First, apply Construction~\ref{CON:WFC} to PG$_d(4)$ with weights $1$ and $3$.  By Lemma~\ref{LEM:smallGDD}, all needed ingredients exist.  Fill groups of size three with blocks.  The result is a PBD with block sizes in $\{3,4,5\}$ such that every three points is contained in a flat on at most $3 \times 21 = 63$ points..  The number of points in this construction hits all odd values from $[d]_4$ to $3[d]_4$.

Next, truncate one point from PG$_d(4)$, leaving a set $\mathcal{L}$ of $[d-1]_4$ disjoint lines of size four.  Carefully assign weights 1 and 3 to the remaining points so that lines in $\mathcal{L}$ have 0, 3 or 4 points of weight 3.   It is not possible to triple exactly 1, 2 or 5 points, but any other positive integer is a sum of $3$s and $4$s.  The needed ingredients for Construction~\ref{CON:WFC} again exist by Lemma~\ref{LEM:smallGDD}.   We achieve all even $v$ from $[d]_4+1$ to $3[d]_4-1$, except
for $[d]_4+\{1,3,9\}$.

For these remaining values, we work from PG$_{d-1}(4)$.  Give either $2$, $4$ or $10$ points weight 5 and the remaining points weight 4.  As before, replace weighted lines with the ingredients in the lemma and fill groups with blocks.  The result is a construction for any number of points in $4[d-1]_4+\{2,6,10\}=[d]_4+\{1,3,9\}$.  In this case, any three points belongs to a flat on at most $4 \times 21 + 10=94$ points.
\end{proof}

\rk
For only mildly large $d$, it is possible to find sets of 10 points in PG$_d(4)$, no four of which are coplanar.  In this case, we can replace `94' by `87'.

\subsection{Summary} 

The following table summarizes the upper bounds on three-point-generated flat sizes for $K=\{3,4,5\}$. 

\begin{center}
\begin{tabular}{lcr}
\hline
value(s) of $v$ &  comments &  flat bound \\
\hline
$[d]_4+\{1,3,9\}$ & worst-case & $94$   \\
$[d]_4+\{1,3,9\}$ & for large $d$ & $87$   \\
$[[d]_4,3[d]_4]$ & inflate, except as above & $63$ \\
$[3[d-1]_4,[d]_4]$ & truncate from PG$_d(4)$  & $21$ \\
$4^d$ & AG$_d(4)$ & 16 \\
$[d]_3$ & PG$_d(3)$ & $13$ \\
$3^d$ & AG$_d(3)$ & 9 \\
$[d]_2=2^{d+1}-1$ & PG$_d(2)$& 7 \\
\hline
\end{tabular}
\end{center}
In the case of AG$_d(4)$ and PG$_d(3)$, we actually have $K=\{3,4\}$ and a (sparse) class of legal truncations is possible.  See \cite{BIB:LinearSpacesSmall} for more details.

\section{Discussion and Applications}

Recall that a \emph{latin square} of order $n$ is an $n \times n$ array on $n$ symbols such that every row and every column exhausts the symbols (with no repetition).  Latin squares are equivalently the operation table of finite `quasigroups', which have a binary operation with two-sided cancellation laws.  A (latin) \emph{subsquare} is a sub-array which is itself a latin square. Note that such a sub-array need not be on a contiguous set of rows and columns.

In a latin square, we often assume the set of symbols (and row/column indices) is $[n]:=\{1,\dots,n\}$.   A latin square is \emph{idempotent} if the entry in diagonal cell $(i,i)$ is $i$ for each $i \in [n]$.  Idempotent latin squares exist for all $n \neq 2$  can be `glued along the diagonal' using a PBD.  In more detail, suppose we have a PBD$(n,K)$, where $K \subseteq \Z_{\ge 3}$.  For every block $B$, let $L^{B}$ be an idempotent latin square on the symbols of $B$. Then we obtain an $n \times n$ idempotent latin square $L$, defined by
\begin{equation}
\label{EQN:ls-glue}
L_{ij} = 
  \begin{cases}
    i, & \text{if $i=j$};\\
    L_{ij}^{B}, & i\neq j, \text{letting $B$ be the block for which $\{i,j\} \subset B$}.
  \end{cases}
\end{equation}
We then have the following direct consequence of Theorem~\ref{THM:345}.

\begin{cor}
For any positive integer $n$, there exists an $n \times n$ latin square with the property that any choice of cell and symbol appear together in a latin subsquare of size at most $94$.
\end{cor}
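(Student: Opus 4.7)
The plan is to invoke the gluing construction in equation (\ref{EQN:ls-glue}) applied to the PBD$(n,\{3,4,5\})$ supplied by Theorem~\ref{THM:345}, and then observe that flats of the PBD correspond to latin subsquares of the resulting square $L$. The heart of the argument is that the three-point-generated flat containing a row $i$, column $j$, and symbol $k$ gives a subsquare containing the chosen cell and symbol.

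First, I would state and briefly verify the correspondence: if $F \subseteq [n]$ is any flat of $(X,\cB)$, then $L|_{F \times F}$ is an idempotent latin square on symbol set $F$. Indeed, for any distinct $i,j \in F$ the unique block $B \in \cB$ with $\{i,j\} \subseteq B$ lies entirely in $F$ (by the definition of flat), so the entry $L_{ij} = L^B_{ij}$ belongs to $B \subseteq F$; combined with $L_{ii}=i$ and the original latin property on rows and columns, the restriction to $F \times F$ is a latin square of order $|F|$.

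Second, given any cell $(i,j)$ and symbol $k$, consider the flat $\langle \{i,j,k\} \rangle$ in the PBD. By Theorem~\ref{THM:345}, this flat has size at most $94$ (in fact at most $63$ outside of the sparse exceptional family). The associated latin subsquare contains row $i$, column $j$, and symbol $k$ simultaneously, which is precisely what is required. The idempotent ingredient squares $L^B$ needed in (\ref{EQN:ls-glue}) exist because $|B|\in\{3,4,5\}$ and idempotent latin squares of every order other than $2$ exist.

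Finally, the three values $n \in \{2,6,8\}$ for which PBD$(n,\{3,4,5\})$ does not exist must be handled separately, but this is trivial: in each case $n \le 94$, so any latin square of order $n$ serves as its own subsquare of size at most $94$. The only subtle point worth flagging is the flat-to-subsquare correspondence above, and this is immediate from the definitions; no serious obstacle arises.
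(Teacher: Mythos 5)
Your proof is correct and follows essentially the same route as the paper: glue idempotent latin squares along the blocks of a PBD$(n,\{3,4,5\})$ from Theorem~\ref{THM:345} via (\ref{EQN:ls-glue}), and note that the flat generated by the three points corresponding to a row, column, and symbol yields the desired subsquare of size at most $94$. Your explicit verification of the flat-to-subsquare correspondence and your handling of the exceptional orders $n\in\{2,6,8\}$ are welcome details that the paper leaves implicit.
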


\begin{proof}
Take a PBD$(n,\{3,4,5\})$ coming from Theorem~\ref{THM:345} and construct an idempotent latin square based on it as in (\ref{EQN:ls-glue}).  The choice of a row, column, and symbol amounts to a selection of three points in the PBD.  Since these three points are contained in a flat of size at most 94, it follows that the chosen cell and symbol are together in a subsquare of at most this size.
\end{proof}

Latin squares are equivalent to one-factorizations (or proper $n$-edge-colorings) of the complete bipartite graphs $K_{n,n}$.  Given two factors (color classes), the union of their edges induces a bipartite $2$-factor.  The problem of minimizing, over all such factorizations, the largest component in any such 2-factor was posed by H\"aggkvist and studied in some recent papers.  In \cite{BIB:LinearSpacesSmall}, it was observed that the above gluing construction with PBD$(v,\Z_{\ge 3})$ also leads to an upper bound of $2 (\max |Y| -  \min |B|)$, where $Y$ is a three-point-generated flat and $B$ is a block.  A crude  bound of 1716 was obtained, but with $|Y| \le 94$ we can now do much better.

\begin{cor}
For any positive integer $n$, there exists an $n$-edge-coloring of $K_{n,n}$ with the property that all two-colored cycles have size at most $182$.
\end{cor}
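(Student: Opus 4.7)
The plan is to combine Theorem~\ref{THM:345} with the gluing construction (\ref{EQN:ls-glue}) and then cite the cycle bound from \cite{BIB:LinearSpacesSmall}. For $n \notin \{2,6,8\}$, I would apply Theorem~\ref{THM:345} to produce a PBD$(n,\{3,4,5\})$ whose three-point-generated flats have size at most $94$. For the small exceptional $n$ (and the trivial $n=1$), any proper $n$-edge-coloring of $K_{n,n}$ has two-colored cycles of length at most $2n$, well below $182$, so these cases are immediate.

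Next, build an idempotent latin square $L$ of order $n$ via (\ref{EQN:ls-glue}), choosing some idempotent latin square $L^B$ on each block $B$ of the PBD. Interpret $L$ as a proper $n$-edge-coloring of $K_{n,n}$, with rows on one side, columns on the other, and symbols as colors. Fix two colors $s \neq t$ and let $B_0$ be the unique block of the PBD through $\{s,t\}$; the goal is to bound the length of any two-colored cycle $C$ in the $\{s,t\}$-subgraph. If $C$ is contained in the rows and columns indexed by $B_0$, then it is a cycle of the subsquare $L^{B_0}$ of order $|B_0| \le 5$ and has length at most $10$. Otherwise, pick any edge $(r,c)$ of $C$ with $r \notin B_0$.

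The heart of the argument, essentially borrowed from \cite{BIB:LinearSpacesSmall}, is that every row- and column-index visited by $C$ lies in the flat $Y := \langle r,s,t \rangle$ but \emph{outside} $B_0$. Walking along $C$, each consecutive triple $\{r',c',s\}$ or $\{r',c',t\}$ spans a block of the PBD that is forced into $Y$, and this block meets $B_0$ only in $s$ or $t$, so its newly discovered row- or column-vertex lies outside $B_0$. Consequently $C$ traverses at most $|Y|-|B_0| \le 94-3 = 91$ row-indices, so $|C| \le 182$. The main technical point is this inductive walk and the verification that $C$ cannot re-enter the $B_0$-indexed rows and columns; both are routine consequences of the fact that two distinct blocks of a PBD meet in at most one point.
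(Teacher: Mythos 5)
Your proposal is correct and follows essentially the same route as the paper: Theorem~\ref{THM:345} plus the gluing construction (\ref{EQN:ls-glue}), with the cycle length bounded by $2(\max|Y|-\min|B|) = 2(94-3) = 182$. The paper simply cites \cite{BIB:LinearSpacesSmall} for that bound without reproving it, so you have in fact supplied more detail (the induction along the cycle showing all visited indices lie in $\langle r,s,t\rangle\setminus B_0$) than the paper does; the only step you gloss is the symmetric case where the cycle leaves $B_0$ only through a column index, which is handled by the same argument applied to the transpose.
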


Perhaps the truth is as low as $6$ for large $n$, so that a mix of 4-cycles and 6-cycles occur in any pair of distinct color classes.  The bound of 182 could be lowered further with a more sophisticated (and, preferably, cleaner) truncation strategy, improving our Subsection 4.1.  This may be a problem of geometric interest in its own right.


\begin{thebibliography}{99}

\bibitem{BIB:NiezenThesis}
J.~Niezen, Pairwise balanced designs of dimension three. M.Sc. thesis, University of Victoria, 2013.

\bibitem{BIB:Chang}
K.I.~Chang, An existence theory for group divisible designs. Ph.D. thesis,
The Ohio State University, 1976.

\bibitem{BIB:Delan}
A.~Delandtsheer, Dimensional linear spaces.
{\it Handbook of incidence geometry}, 193--294, 
North-Holland, Amsterdam, 1995.

\bibitem{Draganova}
A.~Draganova, Asymptotic existence of decompositions of edge-colored graphs
and hypergraphs.
Ph.D. dissertation, UCLA, 2006.

\bibitem{BIB:LinearSpacesSmall}
P.~Dukes, A.C.H.~Ling, Linear spaces with small generated subspaces. \emph{J. Combin. Theory A} 116 (2009), 485--493.

\bibitem{BIB:PrescribedMinimumDimension}
P.J.~Dukes and A.C.H.~Ling, Pairwise balanced designs with prescribed minimum dimension. \emph{Discrete Comput. Geom.} 51 (2014), 485--494.

\bibitem{Liu}
J.~Liu, Asymptotic existence theorems for frames and group divisible
designs. \emph{J. Combin. Theory Ser. A} 114 (2007), 410--420.

\bibitem{BIB:AsymptoticGDD}
H.~Mohacsy, The asymptotic existence of group divisible designs of large order with index one. \emph{J. Combin. Theory A} 118 (2011), 1915--1924.

\bibitem{BIB:Teirlinck}
L.~Teirlinck, On Steiner spaces.
{\em J. Comb. Theory A} 26 (1979), 103--114.

\bibitem{BIB:Wilson}
R.M.~Wilson, An existence theory for pairwise balanced designs II: The structure of PBD-closed sets and the existence conjectures. \emph{J. Combin Theory Series A}, 13 (1972) 246--273.

\bibitem{ConsUses}
R.M.~Wilson, Constructions and uses of pairwise balanced designs.  
{\em Math. Centre Tracts} 55 (1974), 18--41.

\end{thebibliography}
\end{document}